\title[Analysis of Semilinear Heat Equations with Hardy Potential]{Local and Global Analysis of Semilinear Heat Equations with Hardy Potential on Stratified Lie Groups}
\author[D. Suragan and B. Talwar]{Durvudkhan Suragan and Bharat Talwar}
\address{Durvudkhan Suragan, Department of Mathematics, Nazarbayev University, Astana 010000, Kazakhstan}
\email{durvudkhan.suragan@nu.edu.kz}
\address{Bharat Talwar, Department of Mathematics, Indian Institute of Science Education and Research, Bhopal 462066, Madhya Pradesh, India}
\email{btalwar.math@gmail.com, bharattalwar@iiserb.ac.in}
\numberwithin{equation}{section}
\newtheorem{theorem}{\bf Theorem}[section]
\newtheorem{lemma}[theorem]{\bf Lemma}
\newtheorem{remark}[theorem]{\bf Remark}
\newtheorem{defin}[theorem]{\bf Definition}
\newcommand{\seq}{\subseteq}
\newcommand{\G}{\mathbb{G}}
\newcommand{\N}{\mathbb{N}}
\newcommand{\R}{\mathbb{R}}
\newcommand{\ol}{\overline}
\makeatletter \@namedef{subjclassname@2020}{\textup{2020} Mathematics Subject Classification} \makeatother
\subjclass[2020]{35R03, 35B33, 35D30, 35H20}
\begin{document}
	\begin{abstract}
On stratified Lie groups we study a semilinear heat equation with the Hardy potential, a power non-linearity and a forcing term which depends only upon the spacial variable.
The analysis of an equivalent formulation to the problem and an application of a decade old result of Avelin et al. facilitates the management of the singularity in the Hardy potential, thereby yielding results pertaining to both  local and global nonexistence.
In addition, local existence is verified when the gradient term appearing in the Hardy potential is unimodular almost everywhere.
The global existence is also proved under an additional assumption that the forcing term depends on the time variable as well.
Through these results this paper sheds light on the  possible pivotal exponents  for the existence of both local and global solutions to the equation, offering a deeper understanding of the interplay between the model's parameters and the underlying stratified Lie group structure. 
	\end{abstract}
	\keywords{sub-Laplacian, Hardy potential, local solution, global solution, critical exponent, nonexistence}
	\maketitle
	
	\section{Introduction}
H\"{o}rmander's operators are being  studied actively for more than half a century now.
Various partial differential equations, for instance- Kolmogorov-Fokker-Planck equations and the equations determining Brownian motion in periodic potentials, may be written using such operators \cite{BaramantiBook}.
	The so-called lifting technique from \cite{RothschildStein} enables in simplifying the study of these operators by focusing on the analysis of a hypoelliptic operator, the sub-Laplacian.
	In this technique, one locally approximates a H\"{o}rmander's sum of square operator with a sub-Laplacian on a higher dimensional free stratified Lie group.
	Ever since the inception of this technique, the study of partial differential equations on stratified Lie groups have become an active area of research.

Let $\G$ be a stratified Lie group with homogeneous dimension $Q$.
Assume that $Q \geq 3$, so the classical cases of the abelian groups $\R$ and $\R^2$ are omitted.
For the $\Delta_\G$-gauge $d$, except in \Cref{Conjecture} where $d$ is assumed to be a cc-distance, we study the impact of the Hardy potential  $\frac{\left| \nabla_\G d(x) \right|^2}{d(x)^2}$ on the problem we studied in \cite{SuraganTalwar2}. A similar study involving the Hardy potential is made for parabolic equation recently \cite{AbdellaouiSiclariPrimo}.
More precisely, for the self-adjoint and hypoelliptic sum of squares operator $$\Delta_\G = \sum_{i=1}^m X_i^2$$ and $p > 1$, we consider the semilinear subelliptic problem
	\begin{equation}\label{MainProblem}
		\begin{cases} 
			u_t(t,x) - \Delta_\G u(t,x) - \frac{\lambda \left| \nabla_\G d(x) \right|^2 u}{d(x)^2} =  u(t,x)^p + f( x), & (t,x) \in (0,T) \times \G, \\
			u(0, x) = u_0(x), & x \in \G,
		\end{cases}
	\end{equation}
	where $\lambda > 0$ and $f,u \geq 0$.
	Our aim is to find those conditions on the scalar $\lambda$ and the exponent $p$ which guarantee the existence and non-existence of, both, local and global in time weak solutions.
	For this, we derive an equivalent problem (see \ref{MainProblem2}) posed on appropriate weighted spaces and then use the results available with this new setting.
	This approach provides insight on why weighted blow up is observed for weak solutions of certain semilinear heat equations with Hardy potential.
	
It turns out that including the Hardy potential in the problem takes us from $L^p$ space to weighted $L^p$ space.
On the same note, let us now emphasize a similar phenomenon.
Recall that a critical exponent $\alpha$ is the positive real number such that a global solution never exists if $p \leq \alpha$ while a global solution exists for some $u_0$ and $f$ when $\alpha < p$.
When $\G = (\R^n, +)$ and there is neither the potential term $\frac{\left| \nabla_\G d(x) \right|^2}{d(x)^2}$ nor the forcing term $f(x)$ in the problem, the critical exponent is  $\frac{n+2}{n}$ \cite{Hayakawa, Fujita}.
However, when one includes a non-negative forcing term $f(x)$ in this problem, the critical exponent changes to $\frac{n}{n-2}$ \cite{Bandle}.
So, the effect of including the term $f(x)$ is that it takes us from $\R^n$ to $\R^{n-2}$ in the sense that the critical exponent for the problem
\begin{equation*}
\begin{cases} 
u_t(t,x) - \Delta u(t,x) =  u(t,x)^p, & (t,x) \in (0,T) \times \R^n, \\
u(0, x) = u_0(x), & x \in \R^n,
\end{cases}
\end{equation*} is the same as that of
\begin{equation*}
\begin{cases} 
u_t(t,x) - \Delta u(t,x) =  u(t,x)^p + f(x), & (t,x) \in (0,T) \times \R^{n+2}, \\
u(0, x) = u_0(x), & x \in \R^{n+2}.
\end{cases}
\end{equation*}

Several of our findings are natural continuation of the classical results which appeared in \cite{Ishige, AbdellaouiAlonsoRamos}.
In \cite{AbdellaouiAlonsoRamos},  the problem  (\ref{MainProblem}) was studied on $\G = (\R^n, +)$ with the same potential  $\frac{1}{|x|^2}$, since $|\nabla_\G d(x)| \equiv 1$ in this case.
However, the effect of the forcing term $f$ was not taken into account, resulting in $\frac{n-\alpha+2}{n-\alpha}$ as the critical exponent for global existence of a solution for some $\alpha \geq 0$.
When $\G$ is the Heisenberg group $\mathbb{H}^n$, the corresponding linear case of (\ref{MainProblem}) was recently considered in \cite{GoldsteinGoldsteinKogojRhandiTacelli} where the instantaneous blow-up results from \cite{BarasGoldstein, GoldsteinZhang} on $\G = (\R^n, +)$  were generalized to $\mathbb{H}^n$.
With $d(z,l) = (|z|^4 + l^2)^{1/4}$ for $(z,l) \in \mathbb{H}^n$, a critical juncture in their proofs was the fact that one can derive an explicit expression for $\left| \nabla_{\mathbb{H}^n} d(x) \right|$.
	This becomes a major challenge for us as such an expression may not be derived for an arbitrary stratified Lie group.
	So, we take an alternative approach and in the aftermath obtain results which were unknown even for the Heisenberg group.
	Nevertheless, the critical constant  $\frac{Q-2}{2}$ for the Hardy inequality on stratified Lie groups plays a crucial role in our proofs as well.
	Besides the aforementioned articles, the reader may refer to \cite{Gu} and references therein for further related  studies.
	
	It follows from the comparison theorem that a  positive solution $u$ for (\ref{MainProblem}) is bounded from below by a positive solution $v$ for \begin{equation}\label{GWithForcingTerm}
		\begin{cases} 
			v_t(t,x) - \Delta_\G v(t,x) = v(t,x)^p + f( x), & (t,x) \in (0,T) \times \G, \\
			v(0, x) = u_0(x), & x \in \G.
		\end{cases}
	\end{equation}
	Using the results of \cite{SuraganTalwar2} one may now conclude  that there is no global in time weak solution for (\ref{MainProblem}) when $p \leq \frac{Q}{Q-2}$.
	In fact, we will prove that an optimal exponent for non-existence of solutions must be strictly larger than $\frac{Q}{Q-2}$.
	What is more interesting is that, unlike the case of (\ref{GWithForcingTerm}), a local solution may not always exist for (\ref{MainProblem}).

The detailed plan of this paper is as follows.
In \Cref{Preliminaries} we start by supplying the necessary background and definitions for smooth reading of this paper.
Equivalent formulations for (\ref{MainProblem}) are then provided.
This approach and \cite{AvelinCapognaCittiNysrtom} helps us prove that a solution must satisfy a qualitative and quantitative estimate near the identity of $\G$ and that it must belong to certain weighted space.
Deriving this result from \cite{AvelinCapognaCittiNysrtom} is instrumental in our study as this makes it possible for us to apply the classical techniques of \cite{AbdellaouiAlonsoRamos}.
	It is also shown that a weak solution does not exist for $\lambda > \left(\frac{Q-2}{2}\right)^2$, so we can always focus on the other case.
	For this we define $\alpha$ to be the smaller of the two roots of the polynomial $\alpha^2 - (Q-2)\alpha + \lambda$.
	In \Cref{LocalExistence}, we  show that a local weak solution does not exist if $p \geq 1+ \frac{2}{\alpha}$.
	It is also proved that for $p \leq \frac{Q-\alpha}{Q-2-\alpha}$ a global solution $u \in L^1_{loc, d^{-\alpha}}(\G)$ does not exist.
	Finally, in \Cref{Conjecture} we conjecture that $1+ \frac{2}{\alpha}$ and $\frac{Q-\alpha}{Q-2-\alpha}$ are the critical exponents for local and global existence of a weak solution for (\ref{MainProblem}), respectively.
	To support these hypothesis, we  make the study of two separate cases.
	For the local existence, we consider the case when $|\nabla_\G d| = 1$ almost everywhere -see \cite{MontiSerra}.
	Whereas for global existence  we consider a variation of (\ref{MainProblem}) with $|\nabla_\G d| = 1$ almost everywhere and $f$ dependent on both time and space variables.

\section{Preliminaries}\label{Preliminaries}

For the $\Delta_\G$-gauge $d$, the useful Hardy inequality on stratified Lie  groups \cite{Suragan, GoldsteinKombe, Ambrosio}, is
$$\left\| \frac{u}{d(x)} \right\|_2 \leq \frac{2}{Q-2}\left\| \nabla_\G u \right\|_2, \  \ \forall \ u \in C^\infty_c(\G).$$
By replacing $u$ with the function $u_1(x) := u(2 x)$, it is verified that  if $$\left\| \frac{u}{d(x)^s} \right\|_2 \leq C \left\| \nabla_\G u \right\|_2$$ for some constant $C$, then $s = 1$.
Moreover, the best constant $\frac{Q-2}{2}$ is never attained in the space $S_0^1(\G)$, which  the completion of $C^\infty_c(\G)$ with respect to the norm given by $\left\| u \right\|^2 = \int_{\G} (| \nabla_\G u|^2 + |u|^2)$.

For a reason which will become evident in \Cref{IntegralOfDpowerMinusAlphauPowerpIsFinite} (2), let us fix $$\left(\frac{Q-2}{2}\right)^2 \geq \lambda > 0.$$	
For such $\lambda$, set $$\alpha:= \alpha_{-}(\lambda) = \frac{Q-2}{2} - \sqrt{\left(\frac{Q-2}{2}\right)^2 - \lambda}$$ to be the smaller of the two zeroes of the polynomaial $\alpha^2 - (Q-2)\alpha + \lambda$.
Similarly, let $$\alpha_{+}(\lambda) = \frac{Q-2}{2} + \sqrt{\left(\frac{Q-2}{2}\right)^2 - \lambda}.$$
Using \cite[Proposition 5.4.3]{Bonfiglioli} along with the fact that for any $h \in C^2(0,\infty)$ we have $\Delta_\G (h \circ d) = h''(d) |\nabla_\G d|^2 + h'(d) \Delta_\G d$, one obtains $$\frac{ -\lambda \left| \nabla_\G d(x) \right|^2}{d(x)^2}  = d(x)^\alpha \Delta_\G (d^{- \alpha}(x)).$$
Thus, the problem (\ref{MainProblem})  is equivalent to the problem \begin{equation}\label{MainProblem1}
		\begin{cases}
			u_t(t,x) - \Delta_\G u(t,x) + d(x)^\alpha \Delta_\G (d^{- \alpha}(x))u =  u(t,x)^p + f(x), & (t,x) \in (0,T) \times \G, \\
			u(0, x) = u_0(x), & x \in \G,
		\end{cases}
	\end{equation}
where $f,u \geq 0$.
	This suggests that the function on $\G \setminus \{ 0\}$ given by $$x \to d(x)^{-\alpha}$$ has something to do with the solutions of (\ref{MainProblem}).
	
	To dig deeper into this idea, let $u  = d^{-\alpha} w$.
	Then,
	\begin{eqnarray*}
		u^p + f(x)&=& u_t(t,x) - \Delta_\G u(t,x) + d(x)^\alpha \Delta_\G (d^{- \alpha}(x))u \\
		&=& d^{-\alpha}(x) w_t(t,x) - \Delta_\G \left( d^{-\alpha}(x) w(t,x) \right) - \frac{\lambda \left| \nabla_\G d(x) \right|^2 d^{-\alpha} w}{d(x)^2} \\
		&=& d(x)^{-\alpha} w_t(t,x) - (\Delta_\G d(x)^{-\alpha} ) w(t,x) -  (\Delta_\G w(t,x) ) d(x)^{-\alpha} \\ 
		&-& 2(\nabla_\G w(t,x) )  (\nabla_\G d(x)^{-\alpha} )  - \frac{\lambda \left| \nabla_\G d(x) \right|^2 w(t,x)}{d(x)^{2+\alpha}} \\
		&=& d(x)^{-\alpha} w_t(t,x) -   d(x)^{-\alpha} \Delta_\G w(t,x)  -  2(\nabla_\G w(t,x) )  (\nabla_\G d(x)^{-\alpha} ).
	\end{eqnarray*}
	Hence,
	\begin{eqnarray*}
		d^{-\alpha} \left(d^{-\alpha p}w^p + f(x) \right)
		&=& d^{-\alpha} \left( u^p + f(x) \right) \\
		&=& d^{-2\alpha} w_t -   d^{-2\alpha} \Delta_\G w -  2 d^{-\alpha}  (\nabla_\G w )  (\nabla_\G d^{-\alpha} )  \\
		&=&  d^{-2\alpha} w_t -   d^{-2\alpha} \Delta_\G w   -    (\nabla_\G w )  (\nabla_\G d^{-2\alpha} )  \\
		&=& d^{-2\alpha} w_t -   \nabla_\G( d^{-2\alpha} \nabla_\G w )
	\end{eqnarray*}
	So, the problem (\ref{MainProblem}) is in turn equivalent to the problem \begin{equation}\label{MainProblem2}
		\begin{cases} 
			d^{-2\alpha} w_t -   \nabla_\G( d^{-2\alpha} \nabla_\G w ) = d^{-\alpha(p+1)} w(t,x)^p +  d^{-\alpha}f(x), & (t,x) \in (0,T) \times \G, \\
			w(0, x) =  d^{\alpha} u_0(x), & x \in \G
		\end{cases}
	\end{equation}
	where $f,w \geq 0$.

Under the condition that $\left(\frac{Q-2}{2}\right)^2 \geq \lambda > 0$, instead of studying (\ref{MainProblem}) directly, we focus on its equivalent form (\ref{MainProblem2}) with which we are more familiar.
For $r > 0$, define the open ball $$B_r(0)= \{ x \in \G: d(x) < r \}$$ and the punctured closed ball $$B_r'(0) = \{ x \in G :  0 < d(x) \leq r \}.$$
	We fix  $\frac{1}{p} + \frac{1}{p'} = 1$.
	As needed for the following definition to make sense, assume from now onward that $$\int_{\G} d^{-\alpha}(x) f(x) dx < \infty.$$
	Let us emphasize that the measure $dx$ must not be confused with the seminorm $d(x)$.
	\begin{defin}\label{DefinitionLocalWeakSolution}
		Let $w_0 \in L^1_{loc, d^{-2 \alpha}(x)}(\G)$.
		Then $$w \in L^1_{loc, d^{-2 \alpha}(x)}(\G) \cap  L^{p}_{loc, d^{\frac{-\alpha(p+1)}{p}}(x)}\left((0,T), L^{p}_{loc, d^{   \frac{-\alpha(p+1)}{p}  }(x)  }(\G)\right)$$ is called a local in time weak solution of (\ref{MainProblem2}) if for every non-negative test function $\psi \in C^1_c( (0,T); C_c(\G) )$ it satisfies
		\begin{eqnarray*}
			\int_0^T \int_{\G} \left(d^{-\alpha(p+1)}(x) w(t,x)^p \right. &+& \left. d^{-\alpha}(x)f(x)\right) \psi + \int_{\G} d^{-2\alpha}(x) w(0,x) \psi(0, x) \\
			&+&  \int_0^T \int_{\G} d^{-2\alpha}(x) w(t,x) \psi_t + \int_0^T \int_{\G} w \left(\nabla_\G (d^{-2\alpha}(x) \nabla_\G  \psi)\right) = 0.
		\end{eqnarray*}
		When $T = \infty$, such $w$ is called global in time weak solution.
	\end{defin}
	
	\begin{remark}\label{unboundedness}
		Let $u$ be a solution to (\ref{MainProblem}).
		Then $w(t,x):= u(t,x) d^{\alpha}(x)$ is a solution for (\ref{MainProblem2}).
		One may apply comparison theorem from \cite[Theorem 2.1]{RuzhanskySuraganBLMS} followed by the Harnack inequality from \cite[Theorem 1.1]{AvelinCapognaCittiNysrtom} to obtain $r, c(r)> 0$ and $0< t_1 < t_2 < T < \infty$ such that $u(t,x) \geq c(r) d^{-\alpha}(x)$ in $   [t_1, t_2] \times B_r'(0)$.
	\end{remark}
Following is a key lemma which might be available in the literature, but we were unable to trace it.
	\begin{lemma}\label{infiniteintegral}
		For every $r >0$ the integral $$\int\limits_{B_r(0)} d(x)^{-Q}  \left| \nabla_\G d(x) \right|^2$$ is unbounded.
	\end{lemma}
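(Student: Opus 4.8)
The plan is to exploit the defining property of the $\Delta_\G$-gauge: the function $d^{2-Q}$ is a constant multiple of the fundamental solution $\Gamma$ of $-\Delta_\G$, hence it is smooth, positive, and $\Delta_\G$-harmonic on $\G\setminus\{0\}$. Write the sub-Laplacian in divergence form $\Delta_\G u=\dive(A\nabla u)$, where $A(x)=\sum_{i=1}^m a_i(x)\otimes a_i(x)$ is the symmetric matrix built from the coefficient rows $a_i$ of $X_i=a_i\cdot\nabla$; this uses that each $X_i$ is divergence free. One then has the pointwise identity $A\nabla u\cdot\nabla v=\nabla_\G u\cdot\nabla_\G v$, and in particular $A\nabla d\cdot\nabla d=|\nabla_\G d|^2$. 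Since $d=\Gamma^{1/(2-Q)}$ is smooth away from $0$ and $\delta_\lambda$-homogeneous of degree one, the anisotropic Euler relation gives $Zd=d>0$ for the dilation generator $Z$, so the full Euclidean gradient $\nabla d$ never vanishes on $\G\setminus\{0\}$. Consequently every gauge sphere $\{d=s\}$, $s>0$, is a smooth compact hypersurface in $\R^N$ (with $N=\dim\G$), and both the Euclidean divergence theorem and the coarea formula apply.

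First I would compute the flux of $A\nabla(d^{2-Q})$ across a gauge sphere. On the shell $\{\epsilon<d<r\}$ this field is divergence free, so its outward flux across $\{d=s\}$ is independent of $s\in(0,\infty)$; letting $\epsilon\to 0$ and using $-\Delta_\G(d^{2-Q})=c\,\delta_0$ identifies that flux as a fixed nonzero constant. On $\{d=s\}$ the Euclidean outer unit normal is $\nabla d/|\nabla d|$, and combining $\nabla(d^{2-Q})=(2-Q)d^{1-Q}\nabla d$ with $A\nabla d\cdot\nabla d=|\nabla_\G d|^2$, the flux equals $(2-Q)s^{1-Q}\int_{\{d=s\}}\frac{|\nabla_\G d|^2}{|\nabla d|}\,d\mathcal{H}^{N-1}$. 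Setting this equal to the $s$-independent constant yields
\[
\int_{\{d=s\}}\frac{|\nabla_\G d|^2}{|\nabla d|}\,d\mathcal{H}^{N-1}=c_0\,s^{Q-1},\qquad s>0,
\]
for a constant $c_0>0$.

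Next I would feed this into the coarea formula for $d$ with the weight $g=d^{-Q}|\nabla_\G d|^2/|\nabla d|$, so that $g\,|\nabla d|=d^{-Q}|\nabla_\G d|^2$. This gives
\[
\int_{B_r(0)\setminus B_\epsilon(0)} d^{-Q}|\nabla_\G d|^2\,dx=\int_\epsilon^r s^{-Q}\Bigl(\int_{\{d=s\}}\tfrac{|\nabla_\G d|^2}{|\nabla d|}\,d\mathcal{H}^{N-1}\Bigr)ds=c_0\int_\epsilon^r\frac{ds}{s}=c_0\,\ln\frac{r}{\epsilon}.
\]
Letting $\epsilon\to 0^{+}$ forces the integral over $B_r(0)$ to diverge, which is precisely the claim. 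This matches the Euclidean intuition: when $\G=\R^n$, $d=|x|$, $|\nabla_\G d|\equiv 1$ and $Q=n$, the statement reduces to the familiar logarithmic blow-up of $\int_{B_r}|x|^{-n}\,dx$.

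The main obstacle is the careful justification of the two integration tools on the geometry of gauge spheres. I must verify that $|\nabla d|\neq 0$ on $\G\setminus\{0\}$ (done via the Euler relation above) so that the level sets are genuine smooth hypersurfaces and the coarea denominator $|\nabla d|$ is harmless, and I must control the characteristic set $\{|\nabla_\G d|=0\}\cap\{d=s\}$ on which the integrand degenerates; since this set has $\mathcal{H}^{N-1}$-measure zero and the weight is locally integrable, it does not affect the surface integrals. A secondary point is rigorously identifying the constant flux with the mass of the fundamental solution, equivalently checking $c_0>0$, which I would obtain from the standard excision argument on $B_s(0)\setminus B_\epsilon(0)$ as $\epsilon\to 0$, together with the fact that $|\nabla_\G d|^2>0$ $\mathcal{H}^{N-1}$-almost everywhere on each sphere.
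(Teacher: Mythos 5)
Your proposal is correct, but it follows a genuinely different route from the paper's. The paper's proof is elementary and purely measure-theoretic: it decomposes $B_r(0)$ into the dyadic annuli $K_n=\{r2^{-n}<d(x)\le r2^{-n+1}\}$, bounds $d^{-Q}\ge (2^{n-1}/r)^Q$ on $K_n$, and uses the degree-$0$ $\delta_\lambda$-homogeneity of $|\nabla_\G d|$ (with Hardy's inequality guaranteeing that $|\nabla_\G d|$ does not vanish a.e., so the annular averages of $|\nabla_\G d|^2$ are bounded below by a positive constant independent of $n$); every annulus then contributes the same positive constant $C(1-2^{-Q})$ to the sum, which diverges. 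Your proof instead exploits the defining property of the $\Delta_\G$-gauge, that $d^{2-Q}$ is a multiple of the fundamental solution: from the divergence structure of $\Delta_\G$, flux constancy across gauge spheres, and the resulting identity $\int_{\{d=s\}}|\nabla_\G d|^2|\nabla d|^{-1}\,d\mathcal{H}^{N-1}=c_0 s^{Q-1}$, the coarea formula yields the exact logarithmic rate $c_0\log(r/\epsilon)$. What your approach buys is sharper information: the precise rate of divergence and the surface identity itself, which is essentially the kernel normalization in the mean-value formula for sub-Laplacians (cf. Bonfiglioli--Lanconelli--Uguzzoni, Ch.~5); the price is heavier machinery (smoothness and non-degeneracy of the level sets via the Euler relation $Z d=d$, the divergence theorem, coarea, and the distributional excision argument), all of which the paper's dyadic argument avoids. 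Two remarks that would streamline your write-up: first, you need not identify the flux constant with the delta mass at all --- if the common flux were zero, your surface identity combined with coarea would force $\nabla_\G d=0$ a.e., making $d^{2-Q}$ weakly (hence, by hypoellipticity, genuinely) constant, which is absurd since $d^{2-Q}\to\infty$ at the origin; second, your concern about the characteristic set $\{|\nabla_\G d|=0\}\cap\{d=s\}$ is unnecessary, since on that set the continuous integrand $|\nabla_\G d|^2|\nabla d|^{-1}$ merely vanishes (the only denominator is $|\nabla d|$, which you have already shown is nonzero), so no measure-zero control is required.
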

	\begin{proof}
		For $n \in \N$, let $K_n = \{ x \in B_r(0): \frac{r}{2^n} < d(x) \leq \frac{r}{2^{n-1}}  \}$.
		An application of Hardy's inequality gives
		\begin{eqnarray*}
			\int\limits_{B_r(0)} d(x)^{-Q}  \left| \nabla_\G d(x) \right|^2 dx
			&=& \sum_{n \in \N} \int_{K_n} d(x)^{-Q}  \left| \nabla_\G d(x) \right|^2 dx\\
			& \geq & \sum_{n \in \N} \left(\frac{2^{n-1}}{r}\right)^Q \int_{K_n}  \left| \nabla_\G d(x) \right|^2 dx\\
			& \geq & C \sum_{n \in \N} \left(\frac{2^{n-1}}{r}\right)^Q \int_{K_n} dx\\
			&=& C \sum_{n \in \N} \left(\frac{2^{n-1}}{r}\right)^Q  \left(  \left(\frac{r}{2^{n-1}}\right)^Q - \left(\frac{r}{2^n}\right)^Q \right)\\
			&=& C \sum_{n \in \N}  \left(  1 - \left(\frac{1}{2}\right)^Q \right)
		\end{eqnarray*}
		which diverges to infinity.
	\end{proof}
The subsequent remark presents a couple of non-existence results  while clarifying why it is sufficient to focus on the case $\lambda \leq \left(\frac{Q-2}{2}\right)^2$.
	\begin{remark}\label{IntegralOfDpowerMinusAlphauPowerpIsFinite}
		From what is already demonstrated, if $w$ is a weak solution for (\ref{MainProblem2}) then $u = d^{- \alpha} w$ is a weak solution for (\ref{MainProblem}).
		Thus, $u$ must necessarily satisfy $$u_0 \in  L^1_{loc, d^{-\alpha}(x)}(\G),$$ $$\int_0^T \int_{\G} d^{-\alpha}(x) \left( u(t,x)^p +  f(x)\right) < \infty$$
		and
		\begin{eqnarray*}
			\int_0^T \int_{\G} \left(d^{-\alpha}(x) u(t,x)^p \right. &+& \left. d^{-\alpha}(x)f(x)\right) \psi + \int_{\G} d^{-\alpha}(x) u_0 \psi(0, x) \\
			&+&  \int_0^T \int_{\G} d^{-\alpha}(x) u \psi_t + \int_0^T \int_{\G} d^{\alpha}(x) u \left(\nabla_\G (d^{-2\alpha}(x) \nabla_\G  \psi)\right) = 0.
		\end{eqnarray*}
We now consider the two crucial cases to begin with.
\begin{enumerate}
\item\label{NecessityForWeakSupersolution} If $1+ \frac{2}{\alpha} +2 \times \frac{\sqrt{\left(\frac{Q-2}{2}\right)^2 - \lambda}}{\alpha} \leq p$.

Let us suppose that $u$ is a weak solution to (\ref{MainProblem}).
Under the given hypothesis, we have $$p + 1  \geq 2 + \frac{2}{\alpha} +2 \left( \frac{\sqrt{\left(\frac{Q-2}{2}\right)^2 - \lambda}}{\alpha}\right)$$ and hence $$\alpha(p+1) \geq 2\alpha + 2 + 2 \left( \sqrt{\left(\frac{Q-2}{2}\right)^2 - \lambda}\right) =  Q.$$
			Using \Cref{unboundedness} we now obtain $$\int_{t_1}^{t_2} \int_{B_r(0)} d(x)^{-\alpha} u^p \geq C \int_{B_r(0)} d(x)^{-\alpha} d(x)^{-p \alpha}  \geq C \int_{B_r(0)} d(x)^{-Q} = \infty.$$
			This is a contradiction.
			Thus, (\ref{MainProblem}) does not admit a weak solution in this case.

\item\label{supercriticalforlambda} If $\lambda > \left(\frac{Q-2}{2}\right)^2$.
			
Let us suppose that $u$ is a weak solution for (\ref{MainProblem}) and present this problems as
\begin{equation*}
\begin{cases} 
u_t - \Delta_\G u - \frac{ \left(\frac{Q-2}{2}\right)^2 \left| \nabla_\G d \right|^2 u}{d^2} =  \frac{ \left(\lambda - \left(\frac{Q-2}{2}\right)^2 \right) \left| \nabla_\G d \right|^2 u}{d^2} +  u^p + f(x), &  \text{ on } (0,T) \times \G, \\
u(0, x) = u_0(x), & x \in \G.
\end{cases}
\end{equation*}
			From this frame of reference, where new $\lambda$ is $\left(\frac{Q-2}{2}\right)^2$ and $\alpha$ is $\frac{Q-2}{2}$, we obtain
			$$\int_{t_1}^{t_2} \int\limits_{B_r(0)} d(x)^{-\frac{Q-2}{2}}  \left(   \frac{\left(\lambda - \left(\frac{Q-2}{2}\right)^2\right) \left| \nabla_\G d(x) \right|^2 u}{d(x)^2}   \right)  < \infty.$$
			Thus, $$\int_{t_1}^{t_2} \int\limits_{B_r(0)} d(x)^{-2 -\frac{Q-2}{2}}  \left| \nabla_\G d(x) \right|^2 u  < \infty.$$
			Using the fact from \Cref{unboundedness} that  $$u \geq c(r) d(x)^{-\frac{Q-2}{2}}$$ we obtain
			\begin{eqnarray*}
				\int_{t_1}^{t_2} \int\limits_{B_r(0)} d(x)^{-2 -\frac{Q-2}{2}}  \left| \nabla_\G d(x) \right|^2 u 
				& \geq &  c(r) \int_{t_1}^{t_2} \int\limits_{B_r(0)} d(x)^{-Q}  \left| \nabla_\G d(x) \right|^2,
			\end{eqnarray*}
			which is infinite by \Cref{infiniteintegral}.
			This contradicts the existence of $u$.
		\end{enumerate}
	\end{remark}

	\section{Local and global nonexistence}\label{LocalExistence}
	
Now we prove that a local solution for (\ref{MainProblem}) does not exists if $1 + \frac{2}{\alpha} \leq  p$. It should be noted that for $1 + \frac{2}{\alpha} <  p$ we actually prove the nonexistence of supersolutions.

\subsection{Nonexistence for $1+ \frac{2}{\alpha} < p$.}
	Let us suppose on the contrary that there exists a local solution $u$ to (\ref{MainProblem}).
	In view of \Cref{unboundedness} and after a suitable scaling, choose $1> r > 0$ such that $u > 1$ in $[0,T] \times B_0(r)$.
For a compactly supported smooth function $\phi$ on $B_0(r)$, we intend to use $ \frac{|\phi|^2}{u}$ as our test function. The required regularity for $u$ follows from a standard approximation argument that on $[0,T] \times B_0(r)$ the solution $u$ is nothing but the increasing limit of the solutions $u_n$, with $n > r$, to the iterated system \begin{equation*}
			\begin{cases} 
				{u_n}_t(t,x) - \Delta_\G u_n(t,x) - \frac{\lambda \left| \nabla_\G d(x) \right|^2 u_{n-1}}{d(x)^2 + (1/n)} =  u_{n-1}(t,x)^p + f(x), & (t,x) \in (0,T) \times B_0(r), \\
				u_n(0,x) = u(0,x), & x \in B_0(r), \\
				u_n(t, x) = 0, & (t ,x) \in (0,T) \times \partial{B_0(r)}.
			\end{cases}
	\end{equation*} 
Although the following computations are fairly standard, we include the details for clarity. 
Using $ \frac{|\phi|^2}{u}$ as our test function, we obtain 
	$$\int_0^T\int_{B_0(r)} \frac{|\phi|^2}{u}\left(u_t(t,x) - \Delta_\G u(t,x) - \frac{\lambda \left| \nabla_\G d(x) \right|^2 u}{d(x)^2}\right) =  \int_0^T\int_{B_0(r)} u(t,x)^p\frac{|\phi|^2}{u} + f( x) \frac{|\phi|^2}{u}.$$
	Thus,
	$$\int_0^T\int_{B_0(r)} \frac{|\phi|^2}{u}\left(u_t(t,x) - \Delta_\G u(t,x)\right) \geq  \int_0^T\int_{B_0(r)} u(t,x)^p\frac{|\phi|^2}{u}.$$
	Since $\int_{B_0(r)} \log(u(T,x))^{s} < \infty$ for every $s \in [1, \infty)$, an application of Picone's inequality \cite[Lemma 3.1]{RuzhanskySuraganPotentialAnalysis}  gives,
	$$ \int_{B_0(r)} |\phi|^2 \log(u(T,x)) dx + \int_0^T\int_{B_0(r)} |\nabla_\G \phi|^2  dx dt \geq   \int_0^T\int_{B_0(r)} u(t,x)^{p-1}\phi^2 dx dt.$$
	Using \Cref{unboundedness} again, we obtain 
	\begin{equation}\label{Inequality 3.2}
		\int_{B_0(r)} |\phi|^2 \log(u(T,x)) dx + T\int_{B_0(r)} |\nabla_\G \phi|^2  dx \geq  C \int_0^T\int_{B_0(r)} d(x)^{-\alpha(p-1)}\phi^2 dx dt.
	\end{equation}
	From H\"{o}lder's inequality we have
	$$\int_{B_0(r)} |\phi|^2 \log(u(T,x)) dx \leq  \left( \int_{B_0(r)} \log(u(T,x))^{\frac{Q}{2}}  \right)^{\frac{2}{Q}} \left(\int_{B_0(r)} |\phi|^{\frac{2Q}{Q-2}}  \right)^{\frac{Q-2}{Q}} .$$
	Sobolev-type inequality \cite{Folland} now tells us that 
	$$\int_{B_0(r)} |\phi|^2 \log(u(T,x)) dx \leq  \left( \int_{B_0(r)} \log(u(T,x))^{\frac{Q}{2}}  \right)^{\frac{2}{Q}} \left(C \int_{B_0(r)} |\nabla_\G \phi|^2  \right) .$$
	From inequality \ref{Inequality 3.2}, we obtain
	$$  \left( T + C  \left( \int_{B_0(r)} \log(u(T,x))^{\frac{Q}{2}}  \right)^{\frac{2}{Q}}  \right)  \int_{B_0(r)} |\nabla_\G \phi|^2  dx dt \geq  C \int_0^T\int_{B_0(r)}\frac{\phi^2}{d(x)^{\alpha(p-1)}}  dx dt.$$
	This inequality contradicts the Hardy inequality since  $p > 1 + \frac{2}{\alpha}$.
	So no such $u$ exists.

	\subsection{Nonexistence for $1+ \frac{2}{\alpha} = p$}
	
	Let us assume that $\lambda < \left(\frac{Q-2}{2}\right)^2$ because the condition $\lambda = \left(\frac{Q-2}{2}\right)^2$ has already been discussed in (1) of \Cref{IntegralOfDpowerMinusAlphauPowerpIsFinite}.
	Suppose that a local solution $u_1$ exists for (\ref{MainProblem}).
	While dealing with the previous case of $1+ \frac{2}{\alpha} < p$, what helped us in reaching a contradiction to the Hardy inequality was the fact that $u_1 \geq c d(x)^{-\alpha}$ in certain domain.
	In the proof that follows, a similar procedure is followed.
	We first show that some dilation of $u_1$ is bigger than the function $w$ which is defined below and involves another crucial function apart from $d(x)^{-\alpha}$.

	For $0 < r <1$, $x \in B_0(r)$ and any $\beta > 0$, define $$w(t,x) = d(x)^{-\alpha} \left(t^2 \left(\log\left( \frac{1}{d(x)} \right)\right)^\beta + 1 \right) \in C([0,T], L^2(B_0(r))) \cap L^2([0,T], S_0^1(B_0(r))).$$
	A suitable choice of  $\beta$  will lead us to contradicting the Hardy inequality.
	Now,
	$$w^p = d(x)^{-(\alpha + 2)} \left(t^2 \left(\log\left( \frac{1}{d(x)} \right)\right)^\beta + 1 \right)^p,$$
	$$w_t = 2t d(x)^{-\alpha}  \left(\log\left( \frac{1}{d(x)} \right)\right)^\beta$$
	and
	\begin{eqnarray*}
		\Delta_\G w &=& \left(t^2 \left(\log\left( \frac{1}{d(x)} \right)\right)^\beta + 1 \right) \Delta_\G  d(x)^{-\alpha} + d(x)^{-\alpha} \Delta_\G  \left(t^2 \left(\log\left( \frac{1}{d(x)} \right)\right)^\beta + 1 \right) \\
		&+& 2 \nabla_\G d(x)^{-\alpha} \cdot \nabla_\G \left(t^2 \left(\log\left( \frac{1}{d(x)} \right)\right)^\beta + 1 \right)\\
		&=& \left(t^2 \left(\log\left( \frac{1}{d(x)} \right)\right)^\beta + 1 \right) \Delta_\G  d(x)^{-\alpha} + t^2 d(x)^{-\alpha} \Delta_\G  \left( \left(\log\left( \frac{1}{d(x)} \right)\right)^\beta \right) \\
		&+& 2 t^2 \nabla_\G d(x)^{-\alpha} \cdot \nabla_\G \left( \left(\log\left( \frac{1}{d(x)} \right)\right)^\beta  \right).
	\end{eqnarray*}
	For any $s \in (0, \infty)$, set $$g_1(s) = \frac{1}{s}, \ g_2(s) = \log(s) \text{ and } g_3(s) = s^\beta.$$
	Then for
	$$g(s) = (g_3 \circ g_2 \circ g_1)(s) = \left(\log\left(\frac{1}{s}\right)\right)^\beta$$
	we have
	$$g'(s) = g_3'((g_2 \circ g_1)(s)) g_2'(g_1(s)) g_1'(s) =   \frac{-\beta}{s} \left(\log\left(\frac{1}{s}\right)\right)^{\beta - 1} ,$$
	and 
	$$g''(s) = \left(\log\left(\frac{1}{s}\right)\right)^{\beta - 1} \frac{\beta}{s^2} + \frac{\beta}{s^2} (\beta - 1) \left(\log\left(\frac{1}{s}\right)\right)^{\beta - 2} .$$
	Hence,
	\begin{eqnarray*}
		&&\Delta_\G  \left( \left(\log\left( \frac{1}{d(x)} \right)\right)^\beta \right) =  |\nabla_\G d(x)|^2 \left( g''(d(x)) + \frac{Q-1}{d(x)} g'(d(x)))    \right)\\
		&=& |\nabla_\G d(x)|^2 \left( \left(\log\left(\frac{1}{d(x)}\right)\right)^{\beta - 1} \frac{\beta}{d(x)^2} + \frac{\beta}{d(x)^2} (\beta - 1) \left(\log\left(\frac{1}{d(x)}\right)\right)^{\beta - 2} \right. \\
		& +& \left. \frac{-\beta}{d(x)} \frac{Q-1}{d(x)} \left(\log\left(\frac{1}{d(x)}\right)\right)^{\beta - 1}    \right)\\
		&=& \frac{\beta|\nabla_\G d(x)|^2 }{d(x)^2}  \left( \left(\log\left(\frac{1}{d(x)}\right)\right)^{\beta - 1}  +  (\beta - 1) \left(\log\left(\frac{1}{d(x)}\right)\right)^{\beta - 2} \right. \\
		& -& \left. (Q-1) \left(\log\left(\frac{1}{d(x)}\right)\right)^{\beta - 1}     \right)\\
		&=& \left(\log\left(\frac{1}{d(x)}\right)\right)^{\beta - 2} \frac{\beta|\nabla_\G d(x)|^2 }{d(x)^2}  \left( \beta - 1  +  (-Q+2) \left(\log\left(\frac{1}{d(x)}\right)\right)     \right).
	\end{eqnarray*}
	Moreover,
	\begin{eqnarray*}
		\nabla_\G \left(\left(\log\left( \frac{1}{d(x)} \right)\right)^\beta \right) &=&  (g_3 \circ g_2 \circ g_1)'(d(x)) \nabla_\G  d(x) \\
		&=& \frac{-\beta}{d(x)} \left(\log\left(\frac{1}{d(x)}\right)\right)^{\beta - 1}  \nabla_\G  d(x).
	\end{eqnarray*}
	This shows that
	\begin{eqnarray*}
		\Delta_\G w &=& \left(t^2 \left(\log\left( \frac{1}{d(x)} \right)\right)^\beta + 1 \right) \Delta_\G  d(x)^{-\alpha} + t^2 d(x)^{-\alpha} \Delta_\G  \left( \left(\log\left( \frac{1}{d(x)} \right)\right)^\beta \right) \\
		&+& 2 t^2 \nabla_\G d(x)^{-\alpha} \cdot \nabla_\G \left( \left(\log\left( \frac{1}{d(x)} \right)\right)^\beta  \right)\\
		&=& \left(t^2 \left(\log\left( \frac{1}{d(x)} \right)\right)^\beta + 1 \right)  \left( \frac{ -\lambda \left| \nabla_\G d(x) \right|^2}{d(x)^{2 + \alpha} } \right)  \\
		&+& t^2 d(x)^{-\alpha} \left(\log\left(\frac{1}{d(x)}\right)\right)^{\beta - 2} \frac{\beta|\nabla_\G d(x)|^2 }{d(x)^2}  \left( \beta - 1  +  (-Q+2) \left(\log\left(\frac{1}{d(x)}\right)\right)     \right) \\
		&+& 2 \frac{-\beta t^2}{d(x)} \left(\log\left(\frac{1}{d(x)}\right)\right)^{\beta - 1}   \nabla_\G d(x)^{-\alpha} \cdot    \nabla_\G d(x) \\
		&=& \left(t^2 \left(\log\left( \frac{1}{d(x)} \right)\right)^\beta + 1 \right) \left( \frac{ -\lambda \left| \nabla_\G d(x) \right|^2}{d(x)^{2 + \alpha} } \right)   \\
		&+& t^2 d(x)^{-\alpha} \left(\log\left(\frac{1}{d(x)}\right)\right)^{\beta - 2} \frac{\beta|\nabla_\G d(x)|^2 }{d(x)^2}  \left( \beta - 1  +  (-Q+2) \left(\log\left(\frac{1}{d(x)}\right)\right)     \right) \\
		&+& 2 \frac{ \alpha\beta t^2}{d(x)} \left(\log\left(\frac{1}{d(x)}\right)\right)^{\beta - 1}     d(x)^{-\alpha - 1}     |\nabla_\G d(x)|^2 \\
		&=&  \left( \frac{  \left| \nabla_\G d(x) \right|^2}{d(x)^{2 + \alpha} } \right) \times \left(  -\lambda \left(t^2 \left(\log\left( \frac{1}{d(x)} \right)\right)^\beta + 1 \right) \right. \\
		&+& \left. t^2  \left(\log\left(\frac{1}{d(x)}\right)\right)^{\beta - 2} \beta  \left( \beta - 1  +  (-Q+2) \left(\log\left(\frac{1}{d(x)}\right)\right)     \right) \right. \\
		&+& \left. 2 \alpha\beta t^2 \left(\log\left(\frac{1}{d(x)}\right)\right)^{\beta - 1}    \right).
	\end{eqnarray*}
	Thus,
	\begin{eqnarray*}
		&&w_t - \Delta_\G w - \frac{\lambda |\nabla_\G d(x)|^2 w}{d(x)^2}\\
		&=& 2t d(x)^{-\alpha}  \left(\log\left( \frac{1}{d(x)} \right)\right)^\beta - \left( \frac{  \left| \nabla_\G d(x) \right|^2}{d(x)^{2 + \alpha} } \right) \times \left(  -\lambda \left(t^2 \left(\log\left( \frac{1}{d(x)} \right)\right)^\beta + 1 \right) \right. \\
		&+& \left. t^2  \left(\log\left(\frac{1}{d(x)}\right)\right)^{\beta - 2} \beta  \left( \beta - 1  +  (-Q+2) \left(\log\left(\frac{1}{d(x)}\right)\right)     \right) \right. \\
		&+& \left. 2 \alpha\beta t^2 \left(\log\left(\frac{1}{d(x)}\right)\right)^{\beta - 1}    \right) - \frac{\lambda |\nabla_\G d(x)|^2 \left(t^2 \left(\log\left( \frac{1}{d(x)} \right)\right)^\beta + 1 \right)}{d(x)^{2+ \alpha}}\\
		&=& \frac{t}{d(x)^{2+\alpha}} \left\{   2 d(x)^2  \left(\log\left( \frac{1}{d(x)} \right)\right)^\beta -  |\nabla_\G d(x)|^2     \left(  -\lambda \left(t \left(\log\left( \frac{1}{d(x)} \right)\right)^\beta + \frac{1}{t} \right) \right. \right. \\
		&+& \left. \left. t  \left(\log\left(\frac{1}{d(x)}\right)\right)^{\beta - 2} \beta  \left( \beta - 1  +  (-Q+2) \left(\log\left(\frac{1}{d(x)}\right)\right)     \right) \right. \right. \\
		&+& \left. \left. 2 \alpha\beta t \left(\log\left(\frac{1}{d(x)}\right)\right)^{\beta - 1}    +  \lambda  \left(t \left(\log\left( \frac{1}{d(x)} \right)\right)^\beta + \frac{1}{t} \right)  \right) \right\}\\
		&=& \frac{t}{d(x)^{2+\alpha}} \left\{   2 d(x)^2  \left(\log\left( \frac{1}{d(x)} \right)\right)^\beta  -   |\nabla_\G d(x)|^2      t \beta \left(\log\left(\frac{1}{d(x)}\right)\right)^{\beta - 1}   \times   \right.  \\
		&\times& \left. \left( (\beta - 1) \left(\log\left(\frac{1}{d(x)}\right)\right)^{-1}   -Q+2      + 2 \alpha  \right) \right\}
	\end{eqnarray*}
	It is known  (see \cite[Proposition 3.2]{SuraganTalwar2}) that $  |\nabla_\G d(x)|^2$ is bounded.
	Moreover, $r$ can be chosen small enough so that the term $\left( (\beta - 1) \left(\log\left(\frac{1}{d(x)}\right)\right)^{-1}   -Q+2      + 2 \alpha  \right)$ is negative for any given $\beta$.
	Thus, there exists $T_1 \in (0, T)$ such that $$w_t - \Delta_\G w - \frac{\lambda |\nabla_\G d(x)|^2 w}{d(x)^2} \leq \beta h(t,x) w^p \text{ in } \ (0,T_1) \times B_0(r),$$
	where $$h(t,x) = \left(  t^2 \log\left( \frac{1}{d(x)} \right)^\beta + 1  \right)^{1-p}.$$
	Hence,
	$$w_t - \Delta_\G w - \frac{\lambda |\nabla_\G d(x)|^2 w}{d(x)^2} \leq \beta h(t,x) w^p + f(x) \text{ in } \ (0,T_1) \times B_0(r).$$
	For $u = c_1 u_1$, with $c_1$ large and $\beta$ small enough, we have $$u_t - \Delta_\G u - \frac{\lambda |\nabla_\G d(x)| u}{d(x)^2} \geq  c_1^{1-p}  u^p + c_1f \geq \beta u^{p} h(t,x) + c_1f.$$
	Using Kato's inequality  for stratified Lie groups \cite{AmbrosioMitidieri}, just as in \cite[Theorem 3.1]{AbdellaouiAlonsoRamos}, it follows that $u \geq w$.
	As in the previous subsection, we obtain $$  T \int_{B_0(r)} |\nabla_\G \phi|^2  dx dt \geq  C \int_0^T\int_{B_0(r)} u^{p-1} \phi^2  dx dt \geq  C \int_0^T\int_{B_0(r)} w^{p-1} \phi^2  dx dt.$$
	Thus,
	$$T \int_{B_0(r)} |\nabla_\G \phi|^2  dx dt \geq  C   \int_0^T\int_{B_0(r)} d(x)^{-2}  \left(t^2 \log\left( \frac{1}{d(x)} \right)^\beta + 1  \right)^{p-1} \phi^2  dx dt.$$
	This shows that
	$$ \int_{B_0(r)} |\nabla_\G \phi|^2  dx dt \geq  C   \int_{B_0(r)} \frac{\phi^2}{d(x)^{2}}  \log\left( \frac{1}{d(x)} \right)^{\beta(p-1)}   dx dt,$$
	which contradicts the Hardy inequality.
	
Now that it is confirmed that a local solution to (\ref{MainProblem}) may exists only if $1 < p < 1+ \frac{2}{\alpha}$, we consider only this case to study the global non-existence.
The following two theorems show that no global solution exists when $p \leq \frac{Q-\alpha}{Q-2-\alpha}$.
Note that this exponent differs from the one obtained in \cite{AbdellaouiPearlPrimo} and the sole  reason is that the forcing term $f$ is independent of the time variable, as opposed to ibid.
It is expected that if $f$ depends on the time variable as well, then non-existence of global solutions will follow for $p \leq \frac{Q + 2 - \alpha}{Q-\alpha}$ and a solution may be provided in the spirit of \cite{AbdellaouiSiclariPrimo, AbdellaouiPearlPrimo}.
	
\begin{theorem}\label{subcritical}
Let  $p < \frac{Q-\alpha}{Q-2-\alpha}$, $u_0 \geq 0$ and $f \neq 0$.
Then a global in time weak solution for (\ref{MainProblem2}) does not exist.
	\end{theorem}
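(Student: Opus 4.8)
The plan is to run a rescaled test-function argument (of Mitidieri--Pohozaev type, as adapted to forced parabolic equations), the contradiction being driven by the strict positivity of $\int_\G d^{-\alpha}f$. Assume a global weak solution $w\ge 0$ of \eqref{MainProblem2} exists. I would insert into the weak formulation of \Cref{DefinitionLocalWeakSolution} the one-parameter family of nonnegative test functions
\[
\psi_R(t,x)=\eta\!\left(\tfrac{t}{R^2}\right)^{m}\zeta\!\left(\tfrac{d(x)}{R}\right)^{m},\qquad R\ge 1,
\]
where $\eta,\zeta\in C_c^\infty([0,\infty))$ take values in $[0,1]$, equal $1$ on $[0,1]$, vanish on $[2,\infty)$, and $m>p'=\tfrac{p}{p-1}$ is fixed. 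The exponents in the scaling reflect that the gauge $d$ is homogeneous of degree one under the group dilations, so that time must be measured on the scale $d^2$; raising the cut-offs to the power $m$ keeps $\psi_R$ an admissible (smooth, nonnegative, compactly supported in $x$ and toward $t=+\infty$) test function, with $\psi_R\equiv 1$ near the origin so that no spurious singularity is introduced there.

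With this choice the weak formulation becomes
\[
\int_0^\infty\!\!\int_\G d^{-\alpha(p+1)}w^p\psi_R+\int_0^\infty\!\!\int_\G d^{-\alpha}f\,\psi_R+\int_\G d^{-2\alpha}w(0,\cdot)\psi_R(0,\cdot)=-\int_0^\infty\!\!\int_\G d^{-2\alpha}w\,\partial_t\psi_R-\int_0^\infty\!\!\int_\G w\,\nabla_\G\!\left(d^{-2\alpha}\nabla_\G\psi_R\right),
\]
and every term on the left is nonnegative (here $w(0,\cdot)=d^{\alpha}u_0\ge 0$). I would estimate the two terms on the right by Young's inequality with exponents $p,p'$, pairing $w$ with $\partial_t\psi_R$ and with $\nabla_\G(d^{-2\alpha}\nabla_\G\psi_R)$ so as to reproduce a fraction of $\int d^{-\alpha(p+1)}w^p\psi_R$, which is finite for each fixed $R$ and can therefore be absorbed into the left-hand side. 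The decisive algebraic fact is that, thanks to the precise weight $d^{-\alpha(p+1)}$ of the nonlinear term, each Young pairing collapses the accompanying weights to exactly $d^{-\alpha}$; indeed one checks $\left(d^{-2\alpha}\right)^{p'}\left(d^{-\alpha(p+1)}\right)^{-p'/p}=d^{-\alpha}$ and $\left(d^{\alpha(p+1)/p}\,d^{-2\alpha}\right)^{p'}=d^{-\alpha}$. After absorption and discarding the nonnegative $w^p$- and initial-data terms, I am left with $\int_0^\infty\int_\G d^{-\alpha}f\,\psi_R\le C\,\mathcal R(R)$, where $C$ is independent of $R$ and $\mathcal R(R)$ collects integrals of $d^{-\alpha}\Phi_R^{\,m-p'}$ (with $\Phi_R=\eta(t/R^2)\zeta(d/R)$) against the $p'$-th powers of $\partial_t\Phi_R$ and of the first- and second-order derivatives of $\Phi_R$.

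Next I would run the two competing scalings. On the support of the derivatives of $\psi_R$ one has $d\asymp R$ and $t\asymp R^2$; using that the Haar measure scales by $R^{Q}$ under dilations, that $|\nabla_\G d|$ is bounded (see \cite[Proposition 3.2]{SuraganTalwar2}) and that $\Delta_\G d$ is homogeneous of degree $-1$, each spatial derivative of $\zeta(d/R)$ costs $R^{-1}$, $\partial_t\eta(t/R^2)$ costs $R^{-2}$, and $\int_{d\le 2R}d^{-\alpha}\asymp R^{Q-\alpha}$. A direct power count (each of the finitely many pieces of $\mathcal R(R)$, including the cross term produced by $\nabla_\G d^{-2\alpha}$, landing on the same exponent) gives $\mathcal R(R)\le C R^{\,Q-\alpha+2-2p'}$. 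On the other hand, because $\psi_R$ factorises, so does the forcing integral:
\[
\int_0^\infty\!\!\int_\G d^{-\alpha}f\,\psi_R=\left(R^2\int_0^\infty\eta(s)^{m}\,ds\right)\left(\int_\G d^{-\alpha}f\,\zeta(d/R)^{m}\right),
\]
and since $d^{-\alpha}f$ is integrable with $\int_\G d^{-\alpha}f>0$, dominated convergence gives $\int_\G d^{-\alpha}f\,\zeta(d/R)^m\to\int_\G d^{-\alpha}f>0$, whence $\int_0^\infty\int_\G d^{-\alpha}f\,\psi_R\ge c'R^2$ for all large $R$, with $c'>0$. Combining the two bounds yields $c'R^2\le C R^{\,Q-\alpha+2-2p'}$, i.e. $c'\le C R^{\,Q-\alpha-2p'}$. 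The hypothesis $p<\frac{Q-\alpha}{Q-2-\alpha}$ is exactly equivalent to $p'>\frac{Q-\alpha}{2}$, hence to $Q-\alpha-2p'<0$ (note $Q-2-\alpha>0$ since $\alpha\le\frac{Q-2}{2}$); letting $R\to\infty$ forces $c'\le 0$, a contradiction. Therefore no global weak solution exists.

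I expect the only genuinely delicate point to be the rigorous treatment of the spatial term $\int w\,\nabla_\G(d^{-2\alpha}\nabla_\G\psi_R)$: expanding the weighted second-order operator produces both a $d^{-2\alpha}\Delta_\G\psi_R$ contribution and a $\nabla_\G d^{-2\alpha}\cdot\nabla_\G\psi_R$ contribution, and one must verify that after the Young pairing all the resulting weights reduce to $d^{-\alpha}$ and that the genuinely singular factor $d^{-2\alpha-1}$ appearing in the cross term is harmless because the derivatives of $\zeta(d/R)$ are supported in the annulus $R\le d\le 2R$, away from the origin, where this weight is comparable to $R^{-2\alpha-1}$. Keeping track of the $\Phi_R$-powers (so that $m-p'\ge 0$ and $\psi_R$ stays nonnegative and admissible) and confirming the homogeneity-based scaling of $\Delta_\G d$ are the remaining points needing care; the rest is bookkeeping.
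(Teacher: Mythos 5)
Your proposal is correct and follows essentially the same route as the paper: a rescaled test-function (nonlinear capacity) argument with parabolic scaling $d\asymp R$, $t\asymp R^2$ (the paper parametrizes by $T=R^2$ and uses $\psi(t,x)=\Phi(d(x)/\sqrt{T})\,\Phi(2t/T)$), Young absorption of the $w$-terms into the weighted nonlinearity with the same collapse of weights to $d^{-\alpha}$, and the contradiction from $\int_{\G}d^{-\alpha}f>0$ as the parameter tends to infinity, since $p<\frac{Q-\alpha}{Q-2-\alpha}$ is equivalent to $\frac{Q-\alpha}{2}-p'<0$. The only cosmetic difference is that you ensure admissibility by raising the cut-offs to a power $m>p'$, whereas the paper instead imposes the corresponding weighted integrability estimates directly on $\Phi$.
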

	\begin{proof}
Suppose on the contrary that a global in time weak solution $u$ for (\ref{MainProblem2}) exists.
		Then for every test function $\psi$ and every $T > 0$, using the non-negativity of $\psi$ and $u_0$, we obtain \begin{eqnarray*}\label{Inequality}
			\int_0^T \int_{\G} \left(d^{-\alpha(p+1)} u(t,x)^p \right. &+& \left. d^{-\alpha}f(x)\right) \psi \\
			&\leq& - \int_0^T \int_{\G} d^{-2\alpha} u \psi_t - \int_0^T \int_{\G} u \left(\nabla_\G (d^{-2\alpha} \nabla_\G  \psi)\right).
		\end{eqnarray*}
		For an appropriate test function $\psi$, to be chosen at a later stage, applying $\frac{p}{2}-$Young inequality gives
		\begin{eqnarray*}
			\int_{0}^T \int_{\G} d^{-2\alpha} u|\psi_t|  &\leq&  \int_{0}^T \int_{\G} d^{-\alpha\left( \frac{p+1}{p}\right)}u \psi^{\frac{1}{p}} \psi^{\frac{-1}{p}}  |\psi_t| d^{\alpha\left( \frac{1-p}{p}\right)} \\
			& \leq & \frac{1}{2} \int_{0}^T \int_{\G} d^{-\alpha(p+1)}u^p \psi + \frac{1}{p' (\frac{p}{2})^{p'-1}} \int_{0}^T \int_{\G} d^{-\alpha} \psi^{\frac{-1}{p-1}}|\psi_t|^{\frac{p}{p-1}}
		\end{eqnarray*}
		and 
		\begin{eqnarray*}
			\int_{0}^T \int_{\G} u \left(\nabla_\G (d^{-2\alpha} \nabla_\G  \psi)\right) &=& \int_{0}^T \int_{\G} d^{-\alpha\left( \frac{p+1}{p}\right)} u \psi^{\frac{1}{p}} d^{\alpha\left( \frac{p+1}{p}\right)} \psi^{\frac{-1}{p}}   \left(\nabla_\G (d^{-2\alpha} \nabla_\G  \psi)\right)  \\
			& \leq & \frac{1}{2} \int_{0}^T \int_{\G} u^p d^{-\alpha\left(p+1 \right)} \psi \\
			& + &  \frac{1}{p' (\frac{p}{2})^{p'-1}} \int_{0}^T \int_{\G}\psi^{\frac{-1}{p-1}}  d^{\alpha\left( \frac{p+1}{p-1}\right)}   \left(\nabla_\G (d^{-2\alpha} \nabla_\G  \psi)\right) ^{\frac{p}{p-1}}.
		\end{eqnarray*}
		Thus,
		\[\int_{0}^T \int_{\G} d^{-\alpha} f \psi \leq \frac{1}{p' (\frac{p}{2})^{p'-1}} \Big( \int_{0}^T \int_{\G} |\psi|^{\frac{-1}{p-1}} d^{\alpha\left( \frac{p+1}{p-1}\right)} \left(\nabla_\G (d^{-2\alpha} \nabla_\G  \psi)\right) ^{\frac{p}{p-1}} + d^{-\alpha} |\psi|^{\frac{-1}{p-1}}|\psi_t|^{\frac{p}{p-1}} \Big).\]

		In order to choose the a test function $\psi$ which will help in reaching a contradiction, set $$P(x) = \frac{d(x)}{\sqrt{T}}.$$
		Then $$P(x) = (p \circ d)(x)$$ where  $p: \R \to \R$ is given by $$p(w)= \frac{w}{\sqrt{T}}.$$
		Define a test function with separated variables as $$\psi(t,x) = \Phi(P(x)) \Phi(2t/T),$$ where \begin{equation*}
			\Phi(z) = \begin{cases} 
				1, & z \in [0,1],\\
				\searrow, & z \in [1, 2],\\
				0, & z \in (2, \infty),
			\end{cases}
		\end{equation*}
		is a function in $C^2_0(\R^+)$ satisfying the following weighted estimates for every large $T>0$, $$ \int\limits_{B_2'(0)} d^{-2 \alpha}(x') \left|\frac{\left(\Phi' + \Phi''\right)^p(d(\delta_{\sqrt{T}} (x')))}{\Phi(d(\delta_{\sqrt{T}} (x')))}\right|^{\frac{1}{p-1}} dx' < \infty $$  and $$\int_{0}^2  |\Phi(t)|^{\frac{-1}{p-1}}|  \Phi'(t)|^{\frac{p}{p-1}} \int\limits_{B_2'(0)}  d^{-\alpha}(x) |\Phi( d(x))|  < \infty.$$
		Note that $$\text{supp}(\psi) \seq \left\{ (t,x) \in \R^+ \times \G : 0 \leq \frac{2t}{T} \leq 2, 0 \leq P(x) \leq 2 \right\}.$$
		Now $\psi_t(x,t) = \frac{\partial}{\partial t}(\Phi(P(x)) \Phi(\frac{2t}{T})) = \frac{2}{T}  \Phi(P(x)) \Phi'(\frac{2t}{T})$ implies that $$\text{supp}(\psi_t) \seq \left\{ (t,x) \in \R^+ \times \G : 1 \leq \frac{2t}{T} \leq 2, 0 \leq P(x) \leq 2 \right\}.$$
		We now find bounds on the two terms which are important to us.
		With variables $t' = \frac{2t}{T}$ and $x' = \delta_{\frac{1}{\sqrt{T}}} x$, we have
		\begin{eqnarray*}
			\int_{0}^T \int_{\G} d^{-\alpha}(x)  |\psi|^{\frac{-1}{p-1}}|\psi_t|^{\frac{p}{p-1}} &=&  \int_{0}^T \int_{\G} d^{-\alpha}(x) \left|\Phi(P(x)) \Phi\left(\frac{2t}{T}\right)\right|^{\frac{-1}{p-1}} \left|\frac{2}{T}  \Phi(P(x)) \Phi'\left(\frac{2t}{T}\right)\right|^{\frac{p}{p-1}} \\
			& = &  \left(\frac{2}{T}\right)^{{\frac{p}{p-1}}} \int_{0}^T \int_{\G} d^{-\alpha}(x)  |\Phi(P(x))|  \left|\Phi\left(\frac{2t}{T}\right)\right|^{\frac{-1}{p-1}}\left|  \Phi'\left(\frac{2t}{T}\right)\right|^{\frac{p}{p-1}}\\
			& = & \left(\frac{2}{T}\right)^{{\frac{p}{p-1}}} \frac{T^{1+\frac{Q}{2} }}{2} \int\limits_{0}^2 |\Phi(t')|^{\frac{-1}{p-1}}|  \Phi'(t')|^{\frac{p}{p-1}} \times \\
			& & \ \ \ \ \ \ \ \  \ \ \ \ \ \ \ \ \ \ \ \ \ \ \ \ \ \ \ \ \times  \int\limits_{B_2'(0)}  d^{-\alpha}(\delta_{\sqrt{T}}x') |\Phi(d(x'))|  \\
			&=& T^{\frac{Q+2 - \alpha}{2} - \frac{p}{p-1}}  \int_{0}^2  |\Phi(t')|^{\frac{-1}{p-1}}|  \Phi'(t')|^{\frac{p}{p-1}} \int\limits_{B_2'(0)}  d^{-\alpha}(x') |\Phi (d(x'))| \\
			&=& C T^{\frac{Q+2 - \alpha}{2} - \frac{p}{p-1}}.
		\end{eqnarray*}
		Also,
		\begin{eqnarray*}
			\nabla_\G \psi &=& \Phi(2t/T) \nabla_\G \left(\Phi(p \circ d)(x) \right) \\
			&=& \Phi(2t/T) (\Phi \circ p)'(d(x)) \nabla_\G \left(d(x) \right)\\
			&=& \Phi(2t/T) \frac{1}{\sqrt{T}} \Phi'(P(x)) \nabla_\G \left(d(x) \right),
		\end{eqnarray*}
		and hence
		\begin{eqnarray*}
			&&\int_{0}^T \int_{\G} d(x)^{\alpha\left( \frac{p+1}{p-1}\right)}  |\psi|^{\frac{-1}{p-1}}\left(\nabla_\G (d(x)^{-2\alpha} \nabla_\G  \psi)\right)^{\frac{p}{p-1}} \\
			&=&  \frac{1}{\sqrt{T}^{\frac{p}{p-1}}} \int_{0}^T \int_{\G}  \Phi\left(\frac{2t}{T}\right)  d(x)^{\alpha\left( \frac{p+1}{p-1}\right)} \left|\Phi(P(x))\right|^{\frac{-1}{p-1}}  \left| \nabla_\G \left( d^{-2 \alpha}(x) \Phi'(P(x))  \nabla_\G d(x) \right)        \right|^{\frac{p}{p-1}}.
		\end{eqnarray*}
		Note first that
		\begin{eqnarray}
			& & \nabla_\G \left( d^{-2 \alpha} \Phi'(P(x))  \nabla_\G d(x) \right)  \label{Use} \\
			&=& \sum_{i=1}^m \left( d^{-2 \alpha}(x) \Phi'(P(x)) X_i^2(d(x)) + d^{-2 \alpha}(x) \frac{\Phi''(P(x))}{\sqrt{T}} |X_i(d(x))|^2  \right. \nonumber \\ 
			& + & \left.  X_i(d^{-2 \alpha})(x) \Phi'(P(x)) X_i(d(x)) \right). \nonumber
		\end{eqnarray}
		Also,  there exists $C > 0$ such that for every $x \in \G \setminus \{0\}$ we have
		\begin{eqnarray*}
			d^{-2 \alpha}(\delta_{\sqrt{T}}(x))  &=&  T^{- \alpha} d^{-2 \alpha}(x), \ \ \Phi'(P(\delta_{\sqrt{T}}(x))) \leq  C,\\
			X_i^2(d(\delta_{\sqrt{T}}(x))) & = &  \frac{X_i^2 d(x)}{\sqrt{T}}, \ \ \frac{\Phi''(P(\delta_{\sqrt{T}}(x)))}{\sqrt{T}}  \leq  \frac{C}{\sqrt{T}},\\
			|X_i(d(\delta_{\sqrt{T}}(x))|^2 & \leq & | X_i(d(x)) |^2,\\
			X_i(d^{-2 \alpha})(\delta_{\sqrt{T}}(x)) & = & T^{-1/2} X_i (d^{-2 \alpha}(\delta_{\sqrt{T}}(x))) = T^{- \alpha - 1/2}X_i (d^{-2 \alpha}(x)) \  \text{and} \\
			\Phi'(P(\delta_{\sqrt{T}}(x)))  & \leq & C.
		\end{eqnarray*}
		Hence,
		\begin{eqnarray*}
			& &\left|\nabla_\G \left( d^{-2 \alpha}(\delta_{\sqrt{T}} (x)) \Phi'(P(\delta_{\sqrt{T}}(x)))  \nabla_\G d(\delta_{\sqrt{T}}(x)) \right) \right|  \\
			&\leq& C T^{- \alpha - \frac{1}{2}} \sum_{i=1}^m \left( d^{-2 \alpha}(x)  |X_i^2(d(x))| +  d^{-2 \alpha}(x) |X_i(d(x)|^2   +  |X_i(d^{-2 \alpha}(x))|  |X_i(d(x))| \right). 
		\end{eqnarray*}
		Using $x \to \delta_{\sqrt{T}} (x'), t \to \frac{2t'}{T}$ and Equation \ref{Use}, we obtain
		\begin{eqnarray*}
			&&\int_{0}^T \int_{\G} d(x)^{\alpha\left( \frac{p+1}{p-1}\right)} |\psi|^{\frac{-1}{p-1}}\left(\nabla_\G (d(x)^{-2\alpha} \nabla_\G  \psi)\right) ^{\frac{p}{p-1}} \\
			&\leq & C \frac{T^{\frac{Q}{2}+1}}{\sqrt{T}^{\frac{p}{p-1}}} \int_{0}^2 \int\limits_{B_2'(0)}  d(\delta_{\sqrt{T}} (x'))^{\alpha\left( \frac{p+1}{p-1}\right)} \left|\Phi(d(\delta_{\sqrt{T}} (x')))\right|^{\frac{-1}{p-1}} \Phi\left(t'\right) \times \\
			& & \ \ \ \ \ \ \ \ \ \ \ \ \ \ \ \ \ \ \ \ \ \ \ \ \times  \left| \nabla_\G \left( d^{-2 \alpha}(\delta_{\sqrt{T}} (x')) \left(\Phi\right)(P(\delta_{\sqrt{T}}(x')))  \nabla_\G d(\delta_{\sqrt{T}} (x')) \right)        \right|^{\frac{p}{p-1}}\\
			&\leq&  C T^{\frac{Q}{2} + 1 +  \frac{\alpha(p+1)}{2(p-1)}   - \frac{p}{2(p-1)}  - \left(\alpha + \frac{1}{2}\right)\left( \frac{p}{p-1} \right) }  \int\limits_{B_2'(0)} d^{-2 \alpha}(x') \left|\frac{\left(\Phi' + \Phi''\right)^p(d(\delta_{\sqrt{T}} (x')))}{\Phi(d(\delta_{\sqrt{T}} (x')))}\right|^{\frac{1}{p-1}} dx'.
		\end{eqnarray*}
		As, \begin{eqnarray*}
			\int_{0}^T \int_{\G} d^{-\alpha}(x) f \psi &=& \int_{0}^T \Phi(2t/T) \int_{\G} d^{-\alpha}(x) f \Phi(P(x))\\
			& = &  \frac{T}{2} \int_{0}^2 \Phi(t') \int_{\G} d^{-\alpha}(x) f \Phi(P(x)) \geq T \int_{\G} d^{-\alpha}(x) f \Phi(P(x))
		\end{eqnarray*}
		we obtain $$T \int_{\G} d^{-\alpha}(x) f \Phi(P(x)) \leq \int_{0}^T \int_{\G} d^{-\alpha}(x) f \psi \leq C_1 (T^{\frac{Q+2-\alpha}{2} - \frac{p}{p-1}} +  T^{\frac{Q}{2} + 1 -  \frac{\alpha}{2}   - \frac{p}{p-1}  }).$$
		Equivalently, $$\int_{\G} d^{-\alpha}(x) f \Phi(P(x)) \leq T^{-1} \int_{0}^T \int_{\G} d^{-\alpha}(x) f \psi \leq C_1 (T^{\frac{Q-\alpha}{2} - \frac{p}{p-1}} ).$$
		Because $u$ is assumed to be a global solution, one may take $T \to \infty$.
		Then $ \int_{\G} d^{-\alpha}(x) f(x) \Phi \to \int_{\G} d^{-\alpha}(x) f(x) $ by dominated convergence theorem while the right hand side goes to zero since $p < \frac{Q - \alpha}{Q-2 - \alpha}$ implies $$\frac{Q-\alpha}{2} - \frac{p}{p-1} < 0.$$ 
This shows that $\int_{\G} d^{-\alpha}(x) f(x) \leq 0$, contradicting the  hypothesis that $f$ is non-zero and non-negative.
So, no such $u$ exists and the statement of the theorem is proved.
\end{proof}

\begin{theorem}\label{Critical}
	Let $p = \frac{Q - \alpha}{Q-2 - \alpha}$, $u_0 \geq 0$ and $0< \int_{\G} d^{-\alpha}(x) f(x) dx$.
	Then (\ref{MainProblem2}) does not admit a global in time weak solution.
\end{theorem}
\begin{proof}
	We prove this statement by the method of contradiction.
	Let us suppose that $u$ is a global in time weak solution for (\ref{MainProblem2}).
	As in \Cref{subcritical}, for a suitable test function $\psi$, there exists a constant $C$ such that 
	\begin{equation}\label{CriticalInequalityOne}
		\int_{0}^T \int_{\G}  d^{-\alpha} f \psi \leq C \Big( \int_{0}^T \int_{\G}  |\psi|^{\frac{-1}{p-1}} \left(  d^{\alpha\left( \frac{p+1}{p-1}\right)} \left(\nabla_\G (d^{-2\alpha} \nabla_\G  \psi)\right) ^{\frac{p}{p-1}} + d^{-\alpha} |\psi_t|^{\frac{p}{p-1}} \right)  \Big).
	\end{equation}
	For any $1 < R < \infty$, set $$P(x) := \frac{\ln\left(\frac{d(x)}{\sqrt{R}}\right)}{\ln{\sqrt{R}}}$$ and define a test function with separated variables as $$\psi(t,x) = \Phi(P(x)) \Phi\left(t/T \right)$$ where \begin{equation*}
		\Phi(z) = \begin{cases} 
			1, & z \in (- \infty ,0],\\
			\searrow, & z \in [0, 1],\\
			0, & z \in (1, \infty),
		\end{cases}
	\end{equation*}
	is a function in $C^\infty(\R)$ satisfying the estimates $$\int_{0}^1 \left| \frac{\Phi'^p\left(t\right)}{\Phi\left(t\right)}\right|^{\frac{1}{p-1}} dt < \infty$$ and $$\int\limits_{1 \leq d(x) \leq \sqrt{R}}  \left|\frac{(\Phi')^p\left( \frac{\ln(d(x))}{\ln(\sqrt{R})} \right) + (\Phi'')^p\left( \frac{\ln(d(x))}{\ln(\sqrt{R})} \right)}{\Phi\left( \frac{\ln(d(x))}{\ln(\sqrt{R})} \right)} \right|^{\frac{1}{p-1}} dx < \infty$$ where the integrals are independent of $R$.

Note that $$\text{supp}(\psi) \seq \{ (t,x) \in \R^+ \times \G : 0 \leq t \leq T, - \infty < P(x) \leq 1 \}.$$
For $\sqrt{R} \leq d(x)$ we have,
	\begin{eqnarray*}
		- \infty < P(x) \leq 1 &\iff&   - \infty <  \ln\left(\frac{d(x)}{\sqrt{R}}\right) \leq \ln{\sqrt{R}}\\
		& \iff&  0 < \exp{\left(\ln\left(\frac{d(x)}{\sqrt{R}}\right)\right)} \leq \sqrt{R}     \\
		&   \iff &  0 < d(x) \leq R
	\end{eqnarray*}
while for $d(x) \leq \sqrt{R}$ we have,
\begin{eqnarray*}
	- \infty < P(x) \leq 1 &\iff&  - 1 \leq \frac{\ln\left(\frac{\sqrt{R}}{d(x)}\right)}{\ln{\sqrt{R}}} < \infty\\
	& \iff& - \ln{\sqrt{R}} \leq \ln\left(\frac{\sqrt{R}}{d(x)}\right) < \infty\\
	& \iff&  \frac{1}{\sqrt{R}} \leq \frac{\sqrt{R}}{d(x)} < \infty    \\
	&   \iff &  0 < d(x) \leq R
\end{eqnarray*}
which confirms that the set $\text{supp}(\psi)$ is compact.
The fact that $$\psi_t(t,x) = \frac{\partial}{\partial t}\left(\Phi(P(x)) \Phi\left(\frac{t}{T}\right)\right) = \frac{\Phi(P(x))}{T}   \Phi'\left(\frac{t}{T}\right)$$ implies the containment $$\text{supp}(\psi_t) \seq \{ (t,x) \in \R^+ \times \G : 0 \leq t \leq T, 0 < d(x) \leq R \}.$$
	
Using the estimates on $\psi$, for $(t,x) \in \R^+ \times \G$, $x'= \delta_{\frac{1}{\sqrt{R}}}(x)$ and $t' = \frac{t}{T}$, we obtain
\begin{eqnarray*}
&&\int_{0}^T \int\limits_{\text{supp}(\psi_t)}  d^{-\alpha}(x) \psi(t,x)^{\frac{-1}{p-1}}\psi_t(t,x)^{\frac{p}{p-1}} dx dt \\
&=&  \int_{0}^T \int\limits_{\text{supp}(\psi_t)} d^{-\alpha}(x) \left|\Phi(P(x)) \Phi\left(\frac{t}{T}\right)\right|^{\frac{-1}{p-1}} \left|\frac{\Phi(P(x)) \Phi'(t/T)}{T}   \right|^{\frac{p}{p-1}} dx dt \\
& = & R^{\frac{Q}{2}} T \left(\frac{1}{T}\right)^{\frac{p}{p-1}} \int_{0}^1 \int\limits_{0< d(x') < \sqrt{R}} d^{-\alpha}(\delta_{\sqrt{R}}(x')) \Phi(P(\delta_{\sqrt{R}}(x')))  \left| \frac{\Phi'^p\left(t'\right)}{\Phi\left(t'\right)}\right|^{\frac{1}{p-1}} dx' dt'\\
& = & T^{\frac{-p}{p-1}} T R^{\frac{Q-\alpha}{2}} \int_{0}^1 \int\limits_{0< d(x') < \sqrt{R}} d^{-\alpha}(x') \Phi\left( \frac{\ln(d(x'))}{\ln(\sqrt{R})} \right)   \left| \frac{\Phi'^p\left(t'\right)}{\Phi\left(t'\right)}\right|^{\frac{1}{p-1}} dx' dt'\\
& \leq & T^{\frac{-p}{p-1}} T R^{\frac{Q-\alpha}{2}} R^{\frac{-\alpha}{2}} \int_{0}^1   \left| \frac{\Phi'^p\left(t'\right)}{\Phi\left(t'\right)}\right|^{\frac{1}{p-1}} dt' \int\limits_{0< d(x') < \sqrt{R}} \Phi\left( \frac{\ln(d(x'))}{\ln(\sqrt{R})} \right) dx'  \\
& \leq & C T^{\frac{-p}{p-1}} T R^{\frac{Q-\alpha}{2}} R^{\frac{-\alpha}{2}}.
	\end{eqnarray*}
	From  inequality \ref{CriticalInequalityOne} and the fact that \begin{eqnarray*}
		\int_{0}^T \int_{\G} d^{-\alpha}(x) f(x) \psi(t,x) dx dt &=& \int_{0}^T \Phi\left(\frac{t}{T} \right) \int_{\G} d^{-\alpha}(x) f(x) \Phi(P(x)) dx dt \\
		&  = & T \int_{0}^1 \Phi(t') \int_{\G} d^{-\alpha}(x) f(x) \Phi(P(x)) dx dt' \\
		&=&  \geq  C_2 T \int_{\G} d^{-\alpha}(x) f(x) \Phi(P(x)) dx
	\end{eqnarray*}
	we obtain \begin{equation}\label{CriticatlInequality}
		\int_{\G} d^{-\alpha}(x) f(x) \Phi(P(x)) \leq C' \left(\int_{0}^T \int_{\G} \frac{1}{T}  |\psi|^{\frac{-1}{p-1}}      d^{\alpha\left( \frac{p+1}{p-1}\right)} \left(\nabla_\G (d^{-2\alpha} \nabla_\G  \psi)\right) ^{\frac{p}{p-1}} +  T^{\frac{-p}{p-1}} R^{\frac{Q-\alpha}{2}} R^{\frac{-\alpha}{2}} \right).
	\end{equation}
Since $\frac{-p}{p-1} + \frac{Q-\alpha}{2} = 0$ substituting $T = R$ and taking $R \to \infty$ shows that $$T^{\frac{-p}{p-1}} R^{\frac{Q-\alpha}{2}} R^{\frac{-\alpha}{2}} \to 0.$$
It thus suffices to prove that $$\int_{0}^T \int_{\G} \frac{1}{T}  |\psi(x)|^{\frac{-1}{p-1}}   d^{\alpha\left( \frac{p+1}{p-1}\right)}(x) \left(\nabla_\G (d^{-2\alpha}(x) \nabla_\G  \psi(x))\right) ^{\frac{p}{p-1}} \to 0$$ as $T = R \to \infty$.

For this, let us first find $\text{supp}\left(\nabla_\G \left(d(x)^{-2\alpha} \nabla_\G  \psi(t,x) \right)\right)$.
With $$p(s) = \frac{s}{\sqrt{R}} \text{ and } \Phi_1(s) = \frac{s}{\ln \sqrt{R}}$$ we have,
$$P(x) = \left(\Phi_1 \circ \ln \circ p \right)(d(x)).$$
Then,
\begin{eqnarray*}
&&\nabla_\G \left(d(x)^{-2\alpha} \nabla_\G  \psi(t,x) \right)
		= \Phi\left(\frac{t}{T}\right) \nabla_\G \left(  d(x)^{-2\alpha} \nabla_\G  \left(\Phi \circ \Phi_1 \circ \ln \circ p \right)(d(x)) \right) \\
		&=&  \frac{1}{\ln{\sqrt{R}}} \Phi\left(\frac{t}{T}\right) \nabla_\G \left(d(x)^{-2\alpha}  \left( \Phi'(\Phi_1 \circ \ln \circ p)(d(x)) \frac{1}{p(d(x))} \frac{1}{\sqrt{R}} \right)         \nabla_\G  d(x)  \right)\\
		&=&  \frac{1}{\ln{\sqrt{R}}} \Phi\left(\frac{t}{T}\right) \nabla_\G \left(d(x)^{-2\alpha - 1}  \Phi'(P(x))  \nabla_\G  d(x)   \right).
	\end{eqnarray*}
	Hence, $\nabla_\G \left(d(x)^{-2\alpha} \nabla_\G  \psi(t,x) \right) \neq 0 $ only if \begin{eqnarray*}
	\Phi'(P(x)) \neq 0 \iff	0 < P(x) \leq 1  \iff \sqrt{R} < d(x) \leq R.
	\end{eqnarray*}
	Thus,
	$$\text{supp}\left(\nabla_\G \left(d(x)^{-2\alpha} \nabla_\G  \psi(t,x) \right)\right) \seq K:= \{ (t,x) \in \R^+ \times \G : 0 \leq t \leq T, \sqrt{R} < d(x) \leq  R \}.$$

	Moreover, since \begin{eqnarray*}
&& \nabla_\G \left(d(x)^{-2\alpha - 1}  \Phi'(P(x))  \nabla_\G  d(x)   \right) = \sum_{i=1}^m X_i \left(d(x)^{-2\alpha - 1}  \Phi'(P(x)) X_i  d(x)   \right) \\
&=& \sum_{i=1}^m \left(   d(x)^{-2\alpha - 1}  \Phi'(P(x))  X_i^2  d(x) \right. \\ 
&+ &  \left.   \left( d(x)^{-2\alpha - 1} X_i\left( \left(\Phi' \circ \Phi_1 \circ \ln \circ p \right)(d(x))   \right) (d(x)) + X_i (d(x)^{-2\alpha - 1}) \Phi'(P(x))  \right) X_i(d(x))   \right) \\
&=& \sum_{i=1}^m  d(x)^{-2\alpha - 1}  \Phi'(P(x))  X_i^2  d(x)  \\ 
&+ &    \frac{1}{\ln{\sqrt{R}}} d(x)^{-2\alpha - 1} \frac{\Phi''(P(x))}{d(x)} | X_i( d(x) )|^2 + X_i (d(x)^{-2\alpha - 1}) \Phi'(P(x)) X_i(d(x)),
\end{eqnarray*}
there exists $C > 0$ such that for every $x \in \G$ we have
\begin{eqnarray*}
d^{-2 \alpha - 1}(\delta_{\sqrt{R}}(x))
&=&  R^{- \alpha - \frac{1}{2}} d^{-2 \alpha - 1}(x), \ \
\Phi'(P(\delta_{\sqrt{R}}(x)))  \leq  C,\\
X_i^2(d(\delta_{\sqrt{R}}(x))) & = &  \frac{X_i^2 d(x)}{\sqrt{R}}, \ 
\frac{\Phi''(P(\delta_{\sqrt{R}}(x)))}{d(\delta_{\sqrt{R}}(x))}  \leq  \frac{C}{\sqrt{R} d(x)}, \\
|X_i(d(\delta_{\sqrt{R}}(x))|^2  & \leq &   | X_i(d(x)) |^2, \ \ \Phi'(P(\delta_{\sqrt{R}}(x)))   \leq  C \  \text{and} \\
X_i(d^{-2 \alpha - 1})(\delta_{\sqrt{R}}(x)) & = & R^{-1/2} X_i (d^{-2 \alpha - 1 }(\delta_{\sqrt{R}}(x))) = R^{- \alpha - 1}X_i (d^{-2 \alpha - 1}(x)).
\end{eqnarray*}
This implies that 
\begin{eqnarray*}
		&& \nabla_\G \left(d(\delta_{\sqrt{R}}(x))^{-2\alpha - 1}  \Phi'(P(\delta_{\sqrt{R}}(x)))  \nabla_\G  d(\delta_{\sqrt{R}}(x))   \right) \\
		&\leq  & C       R^{- \alpha - 1}   \nabla_\G \left(d(x)^{-2\alpha - 1}  \Phi'(P(\delta_{\sqrt{R}}(x)))  \nabla_\G  d(x)   \right)
	\end{eqnarray*}

	Using $p = \frac{Q - \alpha}{Q-2 - \alpha}$, $t' = \frac{t}{T}$ and $x = \delta_{\sqrt{R}}(x')$, we obtain
	\begin{eqnarray*}
		&& \int_{0}^T \int\limits_{K} \frac{1}{T} |\psi(t,x)|^{\frac{-1}{p-1}} d(x)^{\alpha\left( \frac{p+1}{p-1}\right)} \left(\nabla_\G (d(x)^{-2\alpha} \nabla_\G  \psi(t,x)\right) ^{\frac{p}{p-1}} dx dt\\
		& = &  \left( \frac{1}{\ln{\sqrt{R}}}\right)^{\frac{p}{p-1}} \int_{0}^T \int\limits_{K} \frac{1}{T} |\psi(t,x)|^{\frac{-1}{p-1}} d(x)^{\alpha\left( \frac{p+1}{p-1}\right)} \times \\
		&& \ \ \  \ \ \ \ \ \ \  \  \ \ \ \ \ \ \  \  \ \ \ \ \ \ \  \  \ \times  \left( \Phi\left(\frac{t}{T}\right) \nabla_\G \left(d(x)^{-2\alpha - 1}  \Phi'(P(x))  \nabla_\G  d(x)   \right) \right) ^{\frac{p}{p-1}} dx dt\\
& = &  \left( \frac{1}{\ln{\sqrt{R}}}\right)^{\frac{p}{p-1}} \int_{0}^T \int\limits_{K} \frac{1}{T} \Phi\left(\frac{t}{T}\right) |\Phi(P(x))|^{\frac{-1}{p-1}} d(x)^{\alpha\left( \frac{p+1}{p-1}\right)} \times \\
		&& \ \ \  \ \ \ \ \ \ \  \  \ \ \ \ \ \ \  \  \ \ \ \ \ \ \  \  \ \times  \left( \nabla_\G \left(d(x)^{-2\alpha - 1}  \Phi'(P(x))  \nabla_\G  d(x)   \right) \right) ^{\frac{p}{p-1}} dx dt\\
		& = &  \left( \frac{1}{\ln{\sqrt{R}}}\right)^{\frac{p}{p-1}} R^{\frac{Q}{2}} \int_{0}^1 \Phi\left(t'\right) dt' \int\limits_{B_{\sqrt{R}}'(0) \setminus B_1'(0)} d(\delta_{\sqrt{R}}(x'))^{\alpha\left( \frac{p+1}{p-1}\right)}     \left(\Phi(P(\delta_{\sqrt{R}}(x'))) \right)^{\frac{-1}{p-1}}      \times  \\
		&& \ \ \ \ \ \ \ \ \ \ \ \  \times \left( \nabla_\G \left(d(\delta_{\sqrt{R}}(x'))^{-2\alpha - 1}  \Phi'(P(\delta_{\sqrt{R}}(x')))  \nabla_\G  d(\delta_{\sqrt{R}}(x'))   \right) \right) ^{\frac{p}{p-1}} dx'  \\
		& = & C \left( \frac{1}{\ln{\sqrt{R}}}\right)^{\frac{p}{p-1}} R^{\frac{Q}{2} + \left( \frac{\alpha(p+1)}{2(p-1)}\right)  } \int\limits_{B_{\sqrt{R}}'(0) \setminus B_1'(0)} d(x')^{\alpha\left( \frac{p+1}{p-1}\right)} \left|\Phi\left( \frac{\ln(d(x'))}{\ln(\sqrt{R})} \right) \right|^{\frac{-1}{p-1}} \times \\
		&&   \ \ \ \ \ \ \ \ \ \ \  \ \ \ \ \ \ \times   \left(  \nabla_\G \left(d(\delta_{\sqrt{R}}(x'))^{-2\alpha - 1}  \Phi'\left( \frac{\ln(d(x'))}{\ln(\sqrt{R})} \right)  \nabla_\G  d(\delta_{\sqrt{R}}(x'))   \right) \right) ^{\frac{p}{p-1}} dx' \\
		& \leq &  C \left( \frac{1}{\ln{\sqrt{R}}}\right)^{\frac{p}{p-1}}  R^{\frac{Q}{2} +  \frac{\alpha(p+1)}{2(p-1)}   - \frac{p (\alpha+1)}{p-1}  }  \int\limits_{1 \leq d(x') \leq \sqrt{R}} d(x')^{\alpha\left( \frac{p+1}{p-1}\right)} \left|\Phi\left( \frac{\ln(d(x'))}{\ln(\sqrt{R})} \right) \right|^{\frac{-1}{p-1}} \times \\
		&&   \ \ \ \ \ \ \ \ \ \ \  \ \ \ \ \ \ \  \  \ \ \ \ \ \ \ \ \times   \left(  \nabla_\G \left(d(x')^{-2\alpha - 1}  \Phi'\left( \frac{\ln(d(x'))}{\ln(\sqrt{R})} \right)  \nabla_\G  d(x')   \right) \right) ^{\frac{p}{p-1}} dx' \\
& \leq & C	R^{\frac{-p(\alpha + 1)}{p-1} + \frac{Q}{2} +  \frac{\alpha(p+1)}{2(p-1)}   - \frac{p (\alpha+1)}{p-1}}  \left( \frac{1}{\ln{\sqrt{R}}}\right)^{\frac{p}{p-1}}    \int\limits_{1 \leq d(x') \leq \sqrt{R}}  \left|\frac{(\Phi')^p\left( \frac{\ln(d(x'))}{\ln(\sqrt{R})} \right) + (\Phi'')^p\left( \frac{\ln(d(x'))}{\ln(\sqrt{R})} \right)}{\Phi\left( \frac{\ln(d(x'))}{\ln(\sqrt{R})} \right)} \right|^{\frac{1}{p-1}} dx'.
\end{eqnarray*}
Taking $R \to \infty$ leads to the desired conclusion since $\frac{-p(\alpha + 1)}{p-1} + \frac{Q}{2} +  \frac{\alpha(p+1)}{2(p-1)}   - \frac{p (\alpha+1)}{p-1} < 0$.
\end{proof}

\section{Conjectures}\label{Conjecture}
We present the following two conjectures:
	
\noindent{\bf Conjecture 1.} The problem (\ref{MainProblem}) admits a local solution whenever $1 < p < 1 + \frac{2}{\alpha}$.
	
\noindent{\bf Conjecture 2.} The problem (\ref{MainProblem}) admits a global solution whenever $\frac{Q-\alpha}{Q-2-\alpha} < p < 1+ \frac{2}{\alpha}$.

To support these conjectures, the case in which $|\nabla_\G d| = 1$ almost everywhere (using the $\delta_\lambda$-homogeneity of $d$ this is equivalent to saying that  $|\nabla_\G d| = 1$ almost everywhere on the unit circle with arc length measure) is discussed.
This is true, for example, when $d$ is either a cc-distance \cite[Section 5.2]{Bonfiglioli} (instead of the $\Delta_\G$-gauge) or $\G = (\R^n, +)$ \cite[p. 466]{Bonfiglioli}.
The reader may also refer \cite{MontiSerra} to confirm that such an assumption is not artificial.
The known classical methods for proving the local and global existence work well in this case because the $p$-integrable functions are defined almost everywhere.
Our proofs are written in a way which makes it easier for the reader to observe that the major obstacle while considering $d$ to be the $\Delta_\G$-gauge is that  the infimum of the function $|\nabla_\G d|$ on any given sphere may be zero.
However, if this infimum is not zero, then the proofs work, after  elementary modifications.
Unfortunately, we have been unable to find a proof without imposing this assumption.
	
\subsection{Local existence}
The operator $\Delta_\G$ is infinitesimal generator of a contractive and positive $C_0$-semigroup, which is denoted by $\{ e^{-t \Delta_\G} \}$.
It is easy to verify that a perturbation of $\Delta_\G$ by a bounded operator $V$ will be the infinitesimal generator of a semigroup which we denote by $\{ e^{-t (\Delta_\G + V)} \}$.
	Moreover, if $V$ is bounded by $c$ then $e^{-t (\Delta_\G + V)} \leq e^{-t c} e^{-t \Delta_\G}$ pointwise.
	
It is known from \cite{SuraganTalwar} that (\ref{MainProblem}) without the potential term has a local solution for all $p > 1$.
The proof of \cite[Theorem 2.2 (1)]{SuraganTalwar} may be modified to conclude the existence of a local non-negative solution to \begin{equation*}
		\begin{cases} 
			{u_n}_t(t,x) - \Delta_\G u_n(t,x) - \frac{\lambda \left| \nabla_\G d(x) \right|^2 u_n}{d(x)^2 + (1/n)} =  u_n(t,x)^p + f(x), & (t,x) \in (0,T) \times \G, \\
			u_n(0, x) = u_0(x), & x \in \G, 
		\end{cases}
	\end{equation*} satisfying the integral equation $$u_n(t) = e^{-t(\Delta_\G + V_n)}u_0 +  \int_{0}^{t} e^{-(t-s)(\Delta_\G + V_n)} \left( u_n(s)^p + f(x) \right) ds,$$  where $V_n := \frac{\lambda \left| \nabla_\G d(x) \right|^2 }{(1/n) + d(x)^2}$.
	Here we have used the fact that multiplication by the essentially bounded function $V_n$ defines a bounded operator on $L^q(\G)$ for every $1 \leq q \leq \infty$ -see  \cite[Chapter 3]{Pazy}.
	Since $\{ V_n \}$ is increasing, so is $\{u_n\}$.
	Let the truncation operator $T_n$ be defined by \begin{equation*} T_n(s) = 
		\begin{cases} 
			s, & |s| \leq n, \\
			n \ \text{sign}(s), & |s| \geq n.
		\end{cases}
	\end{equation*}

\begin{theorem}\label{Case1}
Let $1 < p < 1+ \frac{2}{\alpha}$, $\alpha < \beta < \alpha_+(\lambda)$ satisfies $\beta + \alpha < Q$ and suppose that $|\nabla_\G d| = 1$ almost everywhere on $\G$.
Then there exists $\theta, T > 0$  such that for $u_0 \leq a T^{-\theta} d(x)^{-\beta}$ and a suitable choice of small constant $\gamma$ depending upon $T$ and $\theta$ for which $f \leq \gamma d(x)^{-\beta}$, system (\ref{MainProblem}) has a local solution in $L^2(0,T; S_0^1(\G))$.
\end{theorem}
\begin{proof}
Let us start by verifying that $v = a(T - t)^{-\theta} d(x)^{- \beta}$ is a local supersolution to (\ref{MainProblem}) for a suitable choice of $T, \theta, a > 0$.
That is, there exists $T,\theta, a > 0$ for which $$v_t(t,x) - \Delta_\G v(t,x) - \frac{\lambda \left| \nabla_\G d(x) \right|^2 v(t,x)}{d(x)^2} \geq  v(t,x)^p +  f(x).$$
In view of the assumption on $f$, it suffices to prove that 
		$$v_t(t,x) - \Delta_\G v(t,x) - \frac{\lambda \left| \nabla_\G d(x) \right|^2 v(t,x)}{d(x)^2} \geq  v(t,x)^p + \gamma d(x)^{-\beta }.$$
		Now,
		$$v^p (t,x) = a^p(T - t)^{-p \theta} d(x)^{- p \beta},$$
		$$v_t(t,x) = a \theta (T - t)^{-\theta - 1} d(x)^{- \beta}$$ and
		$$\Delta_\G v(t,x)= a(T-t)^{-\theta} |\nabla_\G d(x)|^2 d^{- \beta - 2}(\beta^2 - (Q-2) \beta).$$
		Thus,
		\begin{eqnarray*}
			&& v_t(t,x) - \Delta_\G v(t,x) - \frac{\lambda \left| \nabla_\G d(x) \right|^2 v(t,x)}{d(x)^2} \\
			&=& a\left( \theta (T - t)^{-\theta - 1} d(x)^{- \beta} - (T-t)^{-\theta} |\nabla_\G d(x)|^2 d(x)^{- \beta - 2}(\beta^2 - (Q-2) \beta) \right) \\
			& - & a \left( \frac{\lambda \left| \nabla_\G d(x) \right|^2 (T - t)^{-\theta} d(x)^{- \beta}}{d(x)^2} \right) \\
			& = & a \left( \theta (T - t)^{-\theta - 1} d(x)^{- \beta} - (T-t)^{-\theta} |\nabla_\G d(x)|^2 d(x)^{- \beta - 2}(\beta^2 - (Q-2) \beta + \lambda) \right) \\
			&=&  a (T - t)^{-\theta - 1} d(x)^{-\beta - 2} \left( \theta  d(x)^{2}   -  (T-t) |\nabla_\G d(x)|^2 (\beta^2 - (Q-2) \beta + \lambda)  \right) .
		\end{eqnarray*}
		
		From the assumption that $\alpha < \beta < \alpha_+(\lambda)$, we have $\kappa := - \beta^2 + (Q-2) \beta  - \lambda > 0$.
		Also, $|\nabla_\G d| = 1$ almost everywhere on $\G$.
		It thus suffices to prove that 
		$$ \theta  d(x)^{2}   + \kappa  (T-t)  \geq a^{p-1}(T-t)^{-p \theta  + \theta + 1 } d(x)^{- p \beta + \beta + 2} +  \frac{\gamma(T,\theta)}{a}(T-t)^{\theta + 1 } d(x)^2,$$ for some $\theta, T, a, \gamma(T, \theta) > 0$.
Using the facts that $\beta > 0$ and $1 < p < 1+ \frac{2}{\alpha}$, we obtain $$\beta < p \beta < \beta + \frac{2 \beta}{\alpha} < \beta + 2.$$
		Also, for $$\theta \leq \frac{1}{p-1},$$ we have $$- p \theta + \theta + 1 \geq 0.$$
		In view of these inequalities, large $\theta$ and small $T, a$ and $\gamma$ may be chosen so that the required inequality is satisfied.
		This shows that $v$ is a supersolution to (\ref{MainProblem}).
		Let us denote this supersolution by $\ol{u}$.
		
		Consider now the problem \begin{equation*}
			\begin{cases} 
				(v_0)_t(t,x) - \Delta_\G v_0(t,x) = f(x)  & (t,x) \in (0,T) \times B_0(1), \\
				v_0(0, x) = T_1(\ol{u}(0,x)), & x \in B_0(1),\\
				v_0(t,x) = 0, & (t,x) \in (0,T) \times \partial B_0(1).
			\end{cases}
		\end{equation*}
		From Duhamel's principle, there exists a solution of this heat equation in $$L^2([0,T), S_0^1(B_0(1))).$$
		From comparison theorem \cite[Theorem 2.1]{RuzhanskySuraganBLMS} we obtain that $v_0 \leq \ol{u}$.
		Using  $v_0$ as a starting point, and denoting again by $v_0$ the extention of $v_0$ to $B_0(2)$ given by zero outside $B_0(1)$, we recursively construct a sequence of solutions $\{v_n \in L^2([0,T), S_0^1(B_0(n+1))) \}$ to the problems
		\begin{equation*}
			\begin{cases} 
				(v_n)_t(t,x) - \Delta_\G v_n(t,x) = f(x) + \frac{\lambda \left| \nabla_\G d \right|^2 v_{n-1}(t,x)}{1/n + d(x)^2} + v_{n-1}^p(t,x) & (t,x) \in (0,T) \times B_0(n+1), \\
				v_0(0, x) = T_n(\ol{u}(0, x)), & x \in B_0(n+1),\\
				v_0(t,x) = 0, & (t,x) \in (0,T) \times \partial B_0(n+1).
			\end{cases}
		\end{equation*}
		Such a sequence exists because on the right hand side of every problem there  is a fixed function.
		Moreover, from comparison principle it follows that the series $\{v_n\}$ of non-negative increasing functions is bounded above by $\ol{u}$.
		Let $w$ denote the pointwise limit of a subsequence of $v_n$ in  $L^2( (0,T), L^2_{loc}(\G))$.
		As in \cite{GoldsteinZhang, BarasGoldstein, Prignet}, it follows from standard techniques that $w \in L^2((0,T), S^1_0(\G))$ is a weak solution of (\ref{MainProblem}).
	\end{proof}
	
	\subsection{Global existence}
	Whenever $p > \frac{Q-\alpha}{Q-2-\alpha}$, we anticipate the existence of $u_0$ and $f$ for which a global weak solution of (\ref{MainProblem}) exists.
	Moreover, the initial condition is expected to  depend on the forcing term in the sense that $u_0$ may be chosen to be a scalar multiple of $f$.
	Although we could not prove the same, we now discuss the following problem in which $f$ is dependent on time variable as well.
	Consider,
	\begin{equation}\label{MainProblem3}
		\begin{cases} 
			u_t(t,x) - \Delta_\G u(t,x) - \frac{\lambda \left| \nabla_\G d(x) \right|^2 u}{d(x)^2} =  u(t,x)^p + f(t, x), & (t,x) \in (0,T) \times \G, \\
			u(0, x) = u_0(x), & x \in \G
		\end{cases}
	\end{equation}
	where $\lambda > 0$ and $f,u \geq 0$.
It is worth noting that in the absence of the function $f$, the approach outlined in \cite{AbdellaouiPearlPrimo} may be used to demonstrate the global existence when $p > \frac{Q+2-\alpha}{Q-\alpha}$.
However, when $f \neq 0$ and is time dependent, determining a precise critical exponent becomes challenging.
Despite this, we have established the following results for this scenario:
	
	\begin{theorem}\label{ExistenceOfFamilyOfSupersolutions}
		Let $|\nabla_\G d| = 1$ almost everywhere on $\G$ and  $\frac{Q-\alpha}{Q-2-\alpha} < p < 1+ \frac{2}{\alpha}$  in (\ref{MainProblem3}).
		Then there exists constants $A, c, \gamma$ such that for $$f(t,r)  \leq  A (T+t)^{\frac{-p}{p-1}}\left(\frac{cr}{(T+t)^{\frac{1}{2}}}\right)^{-\gamma} \exp^{\frac{(cr)^2}{4(T+t)}},$$ the problem (\ref{MainProblem3}) has a family of radial global supersolution.
	\end{theorem}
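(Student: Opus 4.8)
The plan is to exhibit an explicit radial function $V$ and verify directly that it is a global supersolution, after which the comparison theorem \cite[Theorem 2.1]{RuzhanskySuraganBLMS} produces a genuine solution beneath any nonnegative datum dominated by $V$. First I would reduce the problem to a one-dimensional differential inequality. Since $|\nabla_\G d| = 1$ almost everywhere and $\Delta_\G d = (Q-1)|\nabla_\G d|^2/d = (Q-1)/d$, any radial profile $v(t,x) = V(t,d(x))$ satisfies $\Delta_\G v = V_{rr} + \frac{Q-1}{r}V_r$ with $r = d(x)$, so $v$ is a supersolution of (\ref{MainProblem3}) exactly when
\[
V_t - V_{rr} - \frac{Q-1}{r}V_r - \frac{\lambda}{r^2}V \;\geq\; V^p + f(t,r);
\]
the group structure disappears and we are left with the radial heat equation with inverse-square potential in Euclidean dimension $Q$. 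Recall that $\alpha$ and $\alpha_+(\lambda)$ are the two roots of $\alpha^2-(Q-2)\alpha+\lambda$, so that $\alpha+\alpha_+(\lambda)=Q-2$ and the two $\Delta_\G$-harmonic powers are $d^{-\alpha}$ and $d^{-\alpha_+(\lambda)}$ with $\alpha_+(\lambda)=Q-2-\alpha$; the parabolic fundamental solution therefore combines a singular power at the origin with Gaussian decay at infinity, which is the structural reason for the two factors in the admissible bound on $f$.

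Building on the finite-time supersolution $(T-t)^{-\theta}d^{-\beta}$ of \Cref{Case1}, I would replace $(T-t)$ by $(T+t)$ to make the profile global in time and attach a Gaussian factor to control the far field and match the time-dependent forcing. Concretely, take
\[
V(t,r) = A\,(T+t)^{-\mu}\left(\frac{cr}{\sqrt{T+t}}\right)^{-\beta}\exp\left(-\frac{(cr)^2}{4(T+t)}\right),\qquad \mu=\frac{1}{p-1},
\]
with $0<c<1$ and an exponent $\beta\in(\alpha,\alpha_+(\lambda))$ to be fixed. Writing everything in the self-similar variable $\eta=cr/\sqrt{T+t}$ and using $\alpha^2-(Q-2)\alpha+\lambda=0$, the left-hand operator applied to $V$ splits into a positive origin term $\kappa\,r^{-\beta-2}$, with $\kappa=(Q-2)\beta-\beta^2-\lambda>0$ precisely because $\alpha<\beta<\alpha_+(\lambda)$, plus terms of order $(T+t)^{-\mu-1}$ carrying the Gaussian. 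The choice $\mu=\frac{1}{p-1}$ makes the operator's natural scale $(T+t)^{-\mu-1}=(T+t)^{-p/(p-1)}$, which is exactly the time-power prescribed for $f$.

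The two hypotheses then govern the two regimes. Near the origin $V\sim r^{-\beta}$ and the dominant balance is $\kappa\,r^{-\beta-2}\geq V^p\sim r^{-\beta p}$, i.e. $\beta p\leq\beta+2$; since $p<1+\frac{2}{\alpha}$ one may take $\beta$ close enough to $\alpha$ that $p<1+\frac{2}{\beta}$, securing the inequality there. For the far field and for global time, the assumption $p>\frac{Q-\alpha}{Q-2-\alpha}$ is equivalent to $\frac{2}{p-1}<\alpha_+(\lambda)=Q-2-\alpha$, that is, the self-similar decay $r^{-2/(p-1)}$ is slower than the stationary rate $r^{-\alpha_+(\lambda)}$; this is the classical Bandle-type condition ensuring that a self-similar profile can dominate a positive forcing for all times. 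Because $V^p$ carries the faster Gaussian $\exp(-p\eta^2/4)$, the defect
\[
V_t - V_{rr} - \frac{Q-1}{r}V_r - \frac{\lambda}{r^2}V - V^p
\]
is, for suitable $A$ and $c$, nonnegative and bounded below by a multiple of $(T+t)^{-p/(p-1)}\eta^{-\gamma}\exp(-\eta^2/4)$ for an explicit $\gamma$ determined by $\beta$; every $f$ under this bound makes $V$ a global supersolution, and letting $(A,c,\gamma)$ vary yields the asserted family.

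The hard part is verifying the profile inequality simultaneously on all of $(0,\infty)$ with a single triple $(A,c,\beta)$: near $r=0$ the positive Hardy surplus $\kappa\,r^{-\beta-2}$ must beat $V^p$, which forces $p<1+\frac{2}{\alpha}$ through the choice of $\beta$, while for large $\eta$ the Gaussian drift must dominate both $V^p$ and $f$, which is where $p>\frac{Q-\alpha}{Q-2-\alpha}$ enters. The intermediate range, where neither term is clearly dominant and the constant $\frac{(Q-2\beta)c^2}{2}-\mu+\frac{\beta}{2}$ controls the sign, must be handled by tuning $c$ close to $1$ and $A$ small. Reconciling these competing demands while keeping $\beta$ inside the admissible window $(\alpha,\alpha_+(\lambda))$ is the technical crux of the argument.
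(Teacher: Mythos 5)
Your proposal is correct and takes essentially the same route as the paper's own proof: the identical self-similar ansatz $A\,(T+t)^{-1/(p-1)}\bigl(cr/\sqrt{T+t}\bigr)^{-\gamma}\exp\bigl(-(cr)^2/(4(T+t))\bigr)$, the same reduction to the radial inequality using $|\nabla_\G d|=1$, and the same tuning of constants (exponent strictly between $\alpha$ and $\frac{2}{p-1}$, which your condition $\beta$ close to $\alpha$ inside $(\alpha,\alpha_+(\lambda))$ reproduces since $\frac{2}{p-1}<\alpha_+(\lambda)$; $A$ small so the negative Hardy surplus absorbs $V^p$; $c<1$ fixed by a continuity argument so the zeroth-order constant is negative). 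The only discrepancy is that your Gaussian-\emph{decaying} bound on $f$ is in fact the hypothesis the paper's proof actually uses, the positive exponent in the theorem statement being a sign slip in the definition of $l$.
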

	\begin{proof}
		To find a family of radial supersolutions, note that the radial form of $$u_t(t,x) - \Delta_\G u(t,x) - \frac{\lambda \left| \nabla_\G d(x) \right|^2 u}{d(x)^2}  \geq u(t,x)^p + f(t, d(x))$$ is
		$$u_t(t,r) - |\nabla_\G r|^2 \left( u_{rr}(t,r) + \frac{Q-1}{r}u_r(t,r) + \frac{\lambda  u(t, r)}{r^2} 	\right)    \geq  u(t,r)^p + f(t, r).$$
		For the smooth bounded and positive function $$g(s):= A (cs)^{-\gamma} \exp^{\frac{- (cs)^2}{4}},$$ and any $T > 0$, define $$u(r,t,T):= (T+t)^{\frac{-1}{p-1}} g \left( \frac{r}{(T+t)^{\frac{1}{2}}} \right).$$
		Then, for $$s = \frac{r}{(T+t)^{\frac{1}{2}}},$$ we have $$u_t =  -(T+t)^{\frac{-p}{p-1}} \left( \frac{g}{p-1} +  \frac{r g'}{2 (T+t)^{\frac{1}{2}}} \right),$$
		$$ u_r = (T+t)^{\frac{-(p+1)}{2(p-1)}} g'(s) ,$$
		$$u_{rr} = (T+t)^{\frac{-p}{p-1}} g''(s),$$
		where 
		$$g' = -A c^{- \gamma} s^{-\gamma - 1} \exp^{\frac{- (cs)^2}{4}} \left(  \frac{c^2 s^2}{2} + \gamma  \right),$$
		and
		$$g''(s)= -A c^{-\gamma} \exp^{\frac{- (cs)^2}{4}} \left(  \frac{c^2 s^{-\gamma}}{2} - c^2 \gamma s^{-\gamma} -\frac{c^4 s^{-\gamma + 2} }{4} -\gamma^2 s^{-\gamma - 2} - \gamma s^{- \gamma - 2} \right) .$$
		For convenience, set $$l = \frac{(cs)^2}{4}.$$
		Now, we need to choose positive constants $A, c$ and $\gamma$ such that 
		
		\begin{eqnarray*}
			0 &\geq & u(t,r)^p + f(t, r) - u_t(t,r) + |\nabla_\G r|^2 \left( u_{rr}(t,r) + \frac{Q-1}{r}u_r(t,r) + \frac{\lambda  u(t, r)}{r^2} 	\right)  \\
			&=& (T+t)^{\frac{-p}{p-1}} A^p (cs)^{-p\gamma} \exp^{-lp} + f(t,r) \\
			&+&  (T+t)^{\frac{-p}{p-1}} \left( \frac{A (cs)^{-\gamma} \exp^{-l}}{p-1} +  \frac{ -A r c^{- \gamma} s^{-\gamma - 1} \exp^{-l} \left(  \frac{c^2 s^2}{2} + \gamma  \right) }{2 (T+t)^{\frac{1}{2}}} \right) \\
			&+& |\nabla_\G r|^2 \left( (T+t)^{\frac{-p}{p-1}} \left( -A c^{-\gamma} \exp^{-l} \left(  \frac{c^2 s^{-\gamma}}{2} - c^2 \gamma s^{-\gamma} -\frac{c^4 s^{-\gamma + 2} }{4} - (\gamma^2 + \gamma) s^{-\gamma - 2}  \right) \right) \right) \\
			& +& |\nabla_\G r|^2 \left( \frac{Q-1}{r} \left( (T+t)^{\frac{-(p+1)}{2(p-1)}} \right) \left( -A c^{- \gamma} s^{-\gamma - 1} \exp^{-l} \left(  \frac{c^2 s^2}{2} + \gamma  \right)   \right) 	\right)\\
			&+& |\nabla_\G r|^2 \left(  \frac{\lambda  (T+t)^{\frac{-1}{p-1}}   A (cs)^{-\gamma} \exp^{-l}}{r^2}  \right).
		\end{eqnarray*}
		This is equivalent to proving the existence of $A,c$ and $\gamma$ for which
		\begin{eqnarray*}
			0& \geq & (T+t)^{\frac{-p}{p-1}} A^{p-1} (cs)^{(1-p)\gamma} \exp^{(1-p)l} + (cs)^\gamma A^{-1}f(t,r) \exp^{l} \\
			&+&  (T+t)^{\frac{-p}{p-1}} \left( \frac{  1}{p-1} +  \frac{ - r  s^{- 1}  \left(  \frac{c^2 s^2}{2} + \gamma  \right) }{2 (T+t)^{\frac{1}{2}}} \right) \\
			&+& |\nabla_\G r|^2 \left( (T+t)^{\frac{-p}{p-1}} \left( -   \left(  \frac{c^2 }{2} - c^2 \gamma  -\frac{c^4 s^{ 2} }{4} -\gamma^2 s^{- 2} - \gamma s^{- 2} \right) \right) \right) \\
			& +& |\nabla_\G r|^2 \left( \frac{Q-1}{r} \left( (T+t)^{\frac{-(p+1)}{2(p-1)}} \right) \left( -  s^{- 1}  \left(  \frac{c^2 s^2}{2} + \gamma  \right)   \right)	+  \frac{\lambda  (T+t)^{\frac{-1}{p-1}}  }{r^2}  \right)\\
			&=& (T+t)^{\frac{-p}{p-1}} A^{p-1} (cs)^{(1-p)\gamma} \exp^{(1-p)l} + (cs)^\gamma A^{-1}f(t, r) \exp^{l} \\
			&+&  (T+t)^{\frac{-p}{p-1}} \left( \frac{  1}{p-1} - \frac{     \frac{c^2 s^2}{2} + \gamma  }{2} \right) \\
			&+& |\nabla_\G r|^2 \left( (T+t)^{\frac{-p}{p-1}} \left( -   \left(  \frac{c^2 }{2} - c^2 \gamma  -\frac{c^4 s^{ 2} }{4} -\gamma^2 s^{- 2} - \gamma s^{- 2} \right) \right) \right) \\
			& +& |\nabla_\G r|^2 \left( \frac{Q-1}{r} \left( (T+t)^{\frac{-(p+1)}{2(p-1)}} \right) \left( -  s^{- 1}  \left(  \frac{c^2 s^2}{2} + \gamma  \right)   \right)	+  \frac{\lambda  (T+t)^{\frac{-p}{p-1}}  }{s^2}  \right)\\
			&=& (T+t)^{\frac{-p}{p-1}} A^{p-1} (cs)^{(1-p)\gamma} \exp^{(1-p)l} + (cs)^\gamma A^{-1}f(t,r) \exp^{l} \\
			&+&  (T+t)^{\frac{-p}{p-1}} \left( \frac{  1}{p-1} - \frac{     \frac{c^2 s^2}{2} + \gamma  }{2} \right) \\
			&+& |\nabla_\G r|^2 \left( (T+t)^{\frac{-p}{p-1}} \left( -   \left(  \frac{c^2 }{2} - c^2 \gamma  -\frac{c^4 s^{ 2} }{4} -\gamma^2 s^{- 2} - \gamma s^{- 2} \right) \right) \right) \\
			& +& |\nabla_\G r|^2 \left( \frac{Q-1}{s} \left( (T+t)^{\frac{-p}{p-1}} \right) \left( -  s^{- 1}  \left(  \frac{c^2 s^2}{2} + \gamma  \right)   \right)	+  \frac{\lambda  (T+t)^{\frac{-p}{p-1}}  }{s^2}  \right).
		\end{eqnarray*}
		Thus, we need to prove that
		\begin{eqnarray*}
			0& \geq & A^{p-1} (cs)^{(1-p)\gamma} \exp^{(1-p)l} + (T+t)^{\frac{p}{p-1}}(cs)^\gamma A^{-1}f(t,r) \exp^{l} \\
			&+&   \left( \frac{  1}{p-1} - \frac{c^2 s^2}{4} - \frac{ \gamma  }{2} \right) \\
			&-& |\nabla_\G r|^2    \left(  \frac{c^2 }{2} - c^2 \gamma  -\frac{c^4 s^{ 2} }{4} -\gamma^2 s^{- 2} - \gamma s^{- 2} +  \frac{Q-1}{s^2}  \left(  \frac{c^2 s^2}{2} + \gamma  \right)  	-  \frac{\lambda    }{s^2}   \right)\\
			&=& A^{p-1} (cs)^{(1-p)\gamma} \exp^{(1-p)l} + (T+t)^{\frac{p}{p-1}}(cs)^\gamma A^{-1}f(t,r) \exp^{l} \\
			&+&   \left( \frac{  1}{p-1} - \frac{c^2 s^2}{4} - \frac{ \gamma  }{2} \right) \\
			&-& |\nabla_\G r|^2    \left(  \frac{c^2 }{2} - c^2 \gamma  -\frac{c^4 s^{ 2} }{4} - \frac{(\gamma^2 -Q \gamma + 2 \gamma  + \lambda   )}{s^2}  +  \frac{(Q-1) c^2}{2}      \right)
		\end{eqnarray*}
		for some $A, c$ and $\gamma$.

		In view of the hypothesis that $$(T+t)^{\frac{p}{p-1}}(cs)^\gamma A^{-1}f(t,r) \exp^{l} \leq 1,$$ it suffices find $A,c$ and $\gamma$ satisfying 
		\begin{eqnarray*}
			0& \geq & A^{p-1} (cs)^{(1-p)\gamma} \exp^{(1-p)l} +  1 +  \frac{  1}{p-1} - \frac{c^2 s^2}{4} - \frac{ \gamma  }{2} \\
			&+& |\nabla_\G r|^2    \left( \frac{c^4 s^{ 2} }{4} + \frac{(\gamma^2 -Q \gamma + 2 \gamma  + \lambda   )}{s^2}  +  c^2 \left( \gamma -\frac{Q}{2}  \right) \right).
		\end{eqnarray*}
		Note that for any $\frac{2}{p-1} >\gamma > \alpha$, we have $$\gamma^2 -Q \gamma + 2 \gamma  + \lambda  < 0.$$	
		Since $|\nabla_\G r| \equiv 1$, we need to prove that 
		\begin{eqnarray*}
			0& \geq & A^{p-1} (cs)^{(1-p)\gamma} \exp^{(1-p)l} +  1 +  \frac{  1}{p-1} - \frac{c^2 s^2}{4} - \frac{ \gamma  }{2} \\
			&+&  \frac{ c^4 s^{ 2} }{4} + \frac{ \gamma^2 -Q \gamma + 2 \gamma  + \lambda   }{s^2}  +  c^2 \left( \gamma - \frac{Q}{2}  \right)\\
			&=& A^{p-1} (cs)^{(1-p)\gamma} \exp^{(1-p)l} + \frac{\gamma^2 -Q \gamma + 2 \gamma  + \lambda   }{s^2}  + \frac{c^4 s^{ 2} }{4} - \frac{c^2 s^2}{4} \\
			&+&  c^2 \left( \gamma -\frac{Q}{2}  \right) - \frac{ \gamma  }{2} +  1+  \frac{  1}{p-1}.
		\end{eqnarray*}
		As $\gamma < \frac{2}{p-1}$,  one may choose small $A> 0$ such that $$\frac{\gamma^2 -Q \gamma + 2 \gamma  + \lambda   }{s^2} + A^{p-1} (cs)^{(1-p)\gamma} \exp^{\frac{(cs)^2 (1-p)}{4}} \leq 0.$$
		Now, let $$H(c,z):=  c^2   \left( z -\frac{Q}{2}  \right) - \frac{z}{2} +  1 +  \frac{  1}{p-1}.$$
		Since $\frac{Q-\alpha}{Q-2-\alpha} < p$, we obtain
		$$H\left( 1, \alpha \right) =   \left( \alpha -\frac{Q}{2}  \right) - \frac{\alpha}{2} +  1 +  \frac{1}{p-1} =   \frac{\alpha}{2} -\frac{Q}{2}    +  1 +  \frac{1}{p-1}  < 0.$$
		Using continuity of $H$, one may choose $c <  1$ and $\alpha< \gamma < \frac{2}{p-1}$ such that $$H(c, \gamma) <0.$$
		Moreover, as $c < 1$, we have 
		$$\frac{ c^4 s^{ 2} }{4} - \frac{c^2 s^2}{4} = \frac{c^2s^2}{4}(c^2 - 1) < 0.$$
		This proves the existence of $A, c$ and $\gamma$.
		So, $$u(r,t,T):= A (rc)^{- \gamma}  (T+t)^{\frac{\gamma}{2} - \frac{1}{p-1}} \exp^{\frac{- \left(c r \right)^2}{4 (T+t)}}$$
		is a family of supersolutions for (\ref{MainProblem3}).
	\end{proof}

	\begin{theorem}
		Let $|\nabla_\G d| = 1$ almost everywhere on $\G$ and  $\frac{Q-\alpha}{Q-2-\alpha} < p < 1+ \frac{2}{\alpha}$ in (\ref{MainProblem3}).
		Then there exists some $f(t,x)$ and $u_0(x)$ such that the problem (\ref{MainProblem3}) has a global solution.
	\end{theorem}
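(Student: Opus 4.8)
The plan is to combine the family of global supersolutions produced in \Cref{ExistenceOfFamilyOfSupersolutions} with the monotone iteration scheme already set up for local existence in \Cref{Case1}. First I would fix $T>0$ and write $\overline{u}(t,x) := u(d(x),t,T)$ for the supersolution of the previous theorem, namely
$$\overline{u}(t,x) = A\,(c\,d(x))^{-\gamma}\,(T+t)^{\frac{\gamma}{2}-\frac{1}{p-1}}\exp^{\frac{-(c\,d(x))^2}{4(T+t)}}.$$
I would then \emph{select} the data to match this barrier: take $f(t,x)$ to be any nonnegative function obeying the growth bound of \Cref{ExistenceOfFamilyOfSupersolutions}, and set the initial datum $u_0(x):=\overline{u}(0,x)$. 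With these choices $\overline{u}$ is, by construction, a supersolution of (\ref{MainProblem3}) on the entire half-line $(0,\infty)$, providing a fixed pointwise barrier that does not degenerate as the time horizon is enlarged.

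Next I would repeat the regularized and domain-truncated iteration of \Cref{Case1}. Replacing the singular potential by $V_n:=\frac{\lambda|\nabla_\G d|^2}{1/n+d(x)^2}$ and working on the balls $B_0(n+1)$ with vanishing boundary data, I would recursively define $v_n\in L^2((0,\infty),S_0^1(B_0(n+1)))$ as the solution of the linear heat problem whose source is $f+V_n v_{n-1}+v_{n-1}^p$, started from the pure heat flow with datum $T_n(\overline{u}(0,\cdot))$. Since $\{V_n\}$ is increasing and each source term is a fixed function, the comparison theorem \cite[Theorem 2.1]{RuzhanskySuraganBLMS} shows that $\{v_n\}$ is a nondecreasing sequence of nonnegative functions and, decisively, that every $v_n$ remains below the global supersolution $\overline{u}$.

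The pointwise monotone limit $u:=\lim_n v_n$ then exists and satisfies $0\le u\le\overline{u}$ on $(0,\infty)\times\G$. The Gaussian factor in $\overline{u}$ forces exponential decay at infinity while the power $d^{-\gamma}$ with $\alpha<\gamma<\frac{2}{p-1}$ keeps $\overline{u}$ inside the weighted spaces of \Cref{DefinitionLocalWeakSolution}, so the uniform domination supplies all the integrability needed to pass to the limit. As in \cite{GoldsteinZhang,BarasGoldstein,Prignet}, standard compactness and regularity then upgrade $u$ to a weak solution $u\in L^2((0,\infty),S_0^1(\G))$ of (\ref{MainProblem3}); because the barrier $\overline{u}$ is valid for every $t>0$, the solution is global.

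I expect the principal difficulty to be the interaction between the singularity of the potential at the identity and the requirement of global-in-time control. The cut-off $V_n$ removes the singularity at each finite stage, but one must verify that the limit $u$ truly solves the equation with the \emph{singular} potential near $d(x)=0$; this is precisely where both the decay of $\overline{u}$ and the hypothesis $|\nabla_\G d|=1$ almost everywhere are indispensable, the latter guaranteeing (as in the supersolution computation of \Cref{ExistenceOfFamilyOfSupersolutions}) that the coefficients behave as in the Euclidean or cc-distance setting, so that no weighted $L^p$ mass is lost on a set where $|\nabla_\G d|$ might otherwise vanish.
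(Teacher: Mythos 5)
Your proposal follows essentially the same route as the paper's own proof: take a supersolution from \Cref{ExistenceOfFamilyOfSupersolutions}, choose $f$ and $u_0$ compatible with it, and run the regularized, domain-truncated monotone iteration of \Cref{Case1} under that barrier, passing to the limit by comparison and the standard arguments of \cite{GoldsteinZhang, BarasGoldstein, Prignet}. Your write-up is in fact more explicit than the paper's two-sentence proof about the choice of data and the uniform domination that makes the solution global, but the underlying argument is identical.
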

	\begin{proof}
		Using the family of radial supersolutions found in \Cref{ExistenceOfFamilyOfSupersolutions}, one may obtain a supersolution $w \in L^2((0,T), S_0^1(\G))$ for every $T >0$.
		Then by an iteration argument, as in \Cref{Case1}, one obtains the existence of a global solution  $u \in L^2((0,T), S_0^1(\G))$ for every $T >0$, which  is bounded above by $w$.
	\end{proof}

	

\subsection*{Acknowledgments}
We thank the anonymous reviewers for carefully reading the earlier version of this manuscript and suggesting several changes which helped us correct certain inequalities thereby improving the article.
	
	
	

\end{document}